\theoremstyle{plain}
\newtheorem{thm}{Theorem}[section]
\newtheorem{prop}[thm]{Proposition}
\newtheorem{claim}[thm]{Claim}
\newtheorem{cor}[thm]{Corollary}
\newtheorem{lem}[thm]{Lemma}
\newtheorem{obs}[thm]{Observation}
\newtheorem{conj}[thm]{Conjecture}
\theoremstyle{definition}
\newtheorem{rem}[thm]{Remark}
\title{Proof of Nash-Williams' Intersection Conjecture for countable matroids}
\author{Attila Jo\'{o}}
\thanks{The author would like to thank the generous support of the Alexander 
von Humboldt Foundation and NKFIH 
OTKA-129211}
\address{Attila Jo\'{o},
University of Hamburg, Department of Mathematics, Bundesstra{\ss}e 55 (Geomatikum), 20146 Hamburg, Germany}
\email{attila.joo@uni-hamburg.de}
\address{Attila Jo\'{o},
Alfr\'{e}d R\'{e}nyi Institute of Mathematics, Set theory and general topology research division, 13-15 Re\'{a}ltanoda St., 
Budapest, Hungary}
\email{jooattila@renyi.hu}
\keywords{matroid intersection, infinite matroid,  wave, augmenthing path}
\subjclass[2010]{Primary 05B35. Secondary 03E05, 05C63.} 
\begin{document}
\begin{abstract}
    We prove that if $ M $ and $ N $ are  finitary  matroids on a  common countable edge  set  $ E $ then they admit a common 
    independent set $I $ such that there is a bipartition $ E=E_{M}\cup E_{N} $ for which $ I\cap E_M  $ spans $ E_M $ in $ 
    M $ and $ I\cap E_N  $ spans $ E_N $ in $ N $.  It answers positively the  Matroid Intersection Conjecture of Nash-Williams 
    in the countable case. 
\end{abstract}

\maketitle

\section{Introduction}

The Matroid Intersection Conjecture of Nash-Williams \cite{MR1678148} has been one of the most important open problems in 
infinite matroid 
theory for decades. It contains as a special case the generalization of Menger's theorem to infinite graphs conjectured by 
Erd\H{o}s and proved by Aharoni and Berger (see \cite{aharoni2009menger} and \cite{MR3784779}). The Matroid 
Intersection 
Conjecture is a 
generalization of the Matroid Intersection Theorem of Edmonds \cite{edmonds2003submodular} to infinite matroids based on 
the complementary 
slackness conditions  (cardinality is usually an overly rough measure to obtain deep results in infinite 
combinatorics).

\begin{conj}[Matroid Intersection Conjecture by Nash-Williams, \cite{MR1678148}]\label{MIC}
If $ M $ and $ N $ are (potentially infinite) matroids on the same edge set $ E $, then they admit a common 
 independent set $ I $ for which there is a bipartition $ E=E_M\cup E_N $ such that $ I_M:=I\cap E_M $ spans $ E_M $ in $ 
 M $ and  $ I_N:=I\cap E_N $ spans $ E_N $ in $ N $.  
\end{conj}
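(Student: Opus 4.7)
The plan is to adapt the \emph{wave} machinery that Aharoni and Berger developed for the infinite Menger theorem to the matroid-intersection setting, combined with an exchange-digraph augmentation argument and a transfinite exhaustion.

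First, I would set up the \emph{exchange digraph} $D_{M,N}(I)$ associated to a common independent set $I$: the vertex set is $E$, with arcs from $x\in I$ to $y\notin I$ whenever $I-x+y$ is $M$-independent, and from $y\notin I$ to $x\in I$ whenever $I-x+y$ is $N$-independent. A desired bipartition $(E_M,E_N)$ witnessing the conjecture corresponds, informally, to a partition of the vertex set into an ``$M$-reachable'' and an ``$N$-reachable'' side from which no further exchange progress is possible. The goal of the whole argument is then to build $I$ and $(E_M,E_N)$ simultaneously so that $I\cap E_M$ spans $E_M$ in $M$ and symmetrically for $N$.

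Second, I would introduce an analogue of a wave: a triple $(F,I_F,J_F)$ with $I_F\subseteq F$, $I_F$ $M$-independent and $M$-spanning in $F$, together with a certificate that no augmenting trajectory in $D_{M,N}$ can leave $F$ on the $M$-side. Waves should be partially ordered by extension, and a standard chain argument (directed unions of waves are waves, at least in the finitary case, where closures commute with directed unions) would produce via Zorn's lemma a maximal wave $(F^\ast,I^\ast_{F^\ast})$. On the complement $E\setminus F^\ast$ one then runs the symmetric construction for $N$, obtaining a wave for $N$ on the remainder; iterating in an alternating fashion, possibly transfinitely, ought to partition $E$ into the two desired sides, with the traces of the successive waves glueing into the common independent set $I$. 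Each step is justified by a finite augmenting path along $D_{M,N}$ (in the finitary case), which either improves $I$ or certifies maximality of the current wave.

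The main obstacle, and the reason the conjecture has remained open in full generality, is twofold. The first difficulty is that in the non-finitary setting, circuits can be infinite, augmenting ``paths'' in $D_{M,N}$ may themselves be infinite, and directed unions of waves need not be waves because closure in a non-finitary matroid does not commute with directed unions. One would have to either establish a substitute closure property on a suitably restricted class (for example tame matroids, or matroids dual to finitary ones) or reformulate waves in a purely circuit-free manner compatible with taking limits. The second difficulty is uncountability: even granting finitary matroids, the iterative augmentation must be carried out along a transfinite enumeration of $E$, and one needs a bookkeeping device (a potential function or a priority queue) guaranteeing that every element is eventually assigned to $E_M$ or $E_N$ and that no augmentation performed at a later stage destroys a spanning property secured at an earlier stage.

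Consequently, my expected route is: first carry out the program on countable finitary matroids, where augmenting paths are finite and a single $\omega$-length induction with an enumeration of $E$ suffices to drive the wave construction to completion; then, as a second phase, investigate whether the wave formalism survives a move to arbitrary finitary matroids via a Löwenheim--Skolem-style reduction, and whether duality or tameness assumptions allow the non-finitary case. I expect the genuinely hard step to be showing that the maximal wave $(F^\ast,I^\ast_{F^\ast})$ actually yields a spanning independent set on $F^\ast$, which is where augmenting-path maximality must be converted into an $M$-spanning certificate; the remainder of the argument is then driven by the symmetric construction on the complement.
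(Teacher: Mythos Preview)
A framing note first: the paper does \emph{not} prove Conjecture~\ref{MIC} in full generality; it establishes only the countable case where each matroid is (nearly) finitary or (nearly) cofinitary (Theorem~\ref{thm matr int}). Since your proposal also treats the general case as open and targets the countable finitary case as its first phase, the relevant comparison is with the paper's proof of that case.

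Your ingredients---exchange digraph, waves, augmenting paths, an $\omega$-recursion along an enumeration of $E$---are the right ones, but the architecture and the decisive technical step differ. The paper does \emph{not} alternate $M$-waves and $N$-waves on successive complements. It first reduces the Intersection property to a one-sided statement (Corollary~\ref{MI ekv ind-span}): $\mathsf{cond}(M,N)$ implies there is an $M$-independent base of $N$. The maximal wave $W(M,N)$ is invoked once, up front, to normalize to the situation $\mathsf{cond}^{+}(M,N)$ in which the only waves are sets of $M$-loops; it is not iterated. The genuine gap in your plan is precisely the step you flag as hardest---why repeated augmentation eventually spans every edge---for which you offer only ``a potential function or a priority queue''. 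The paper's answer (Lemma~\ref{key lemma}) is a concrete stabilization argument: choose each augmenting path $P_n$ with $\prec$-minimal first vertex; if some $x$ is never $N$-spanned, then from some stage on $P_n$ must avoid the set $E(x,n)$ reachable from $x$ in $D(I_n)$, and an arc-preservation lemma (Lemma~\ref{arc remain lemma}: an arc $ef$ of $D(I)$ survives in $D(I\vartriangle P)$ whenever $P$ misses $e$ and all out-neighbours of $e$) forces $I_n\cap E(x,n)$ and the fundamental $M$-circuits inside $E(x,n)$ to stabilize (Claim~\ref{stabilazing stuff}); the union of these reachable sets is then a non-trivial $(M,N)$-wave, contradicting $\mathsf{cond}^{+}(M,N)$. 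This mechanism---tracking exactly which arcs of the exchange digraph persist through an augmentation and using that to manufacture a limit wave from the ``never-spanned'' edges---is absent from your outline, and without it nothing prevents the recursion from oscillating indefinitely without ever spanning a fixed edge.
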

A ``potentially infinite matroid'' originally meant 
 an $ (E, \mathcal{I}) $ with $ \mathcal{I}\subseteq \mathcal{P}(E) $ where:
\begin{enumerate}
 [label=(\roman*)]
\item\label{item fin 1} $ \varnothing\in \mathcal{I} $;
\item\label{item fin 2} $ \mathcal{I} $ is downward closed;
\item\label{item fin 3} For every finite $ I,J\in \mathcal{I} $ with $ \left|J\right|=\left|I\right|+1 $, there exists an 
$  e\in 
J\setminus I $ such 
that
$ I+e\in \mathcal{I} $;
\item\label{item finitary} If every  finite subset of an $ X\subseteq E $ is in $ \mathcal{I} $, then $ X\in \mathcal{I} $.
\end{enumerate}

Matroids satisfying the axioms above are called nowadays \emph{finitary} and they form a proper subclass of  matroids.
Adapting the terminology introduced by Bowler and Carmesin  in \cite{MR3347465}, we say that 
the matroid pair $ \{ M,N \} $ 
(where $ E(M)=E(N) $) has the \emph{Intersection property} if they admit a common independent set
demanded by  Conjecture \ref{MIC}. The first (and for a long time the only) partial result on Conjecture 
\ref{MIC} was due 
to Aharoni and Ziv:

\begin{thm}[Aharoni and Ziv, \cite{MR1678148}]\label{thm Aharoni Ziv}
Let $ M $ and $ N $ be  finitary matroids on the same countable edge set and assume that  $ M $ is the direct sum of 
matroids of finite rank. Then $ \{ M,N \} $ has the Intersection property.
\end{thm}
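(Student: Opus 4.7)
The plan is a compactness / diagonalisation argument built on Edmonds' finite matroid intersection theorem, leveraging the direct-sum structure of $M$. Write $M = \bigoplus_{n \in \omega} M_n$ with each $M_n$ of finite rank $r_n$ on ground set $E_n$, and enumerate $E = \{e_0, e_1, \dots\}$, setting $F_k := \{e_0, \dots, e_{k-1}\}$. For every $k$, applying Edmonds' theorem to the finite pair $(M|F_k, N|F_k)$ produces a common independent set $I_k \subseteq F_k$ and a bipartition $F_k = A_k \sqcup B_k$ with $I_k \cap A_k$ a basis of $A_k$ in $M|F_k$ and $I_k \cap B_k$ a basis of $B_k$ in $N|F_k$. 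Since $|I_k \cap E_n| \leq r_n$ for every $n$, a standard diagonal argument over the countable set $E$ extracts a subsequence along which (i) the membership of each $e \in E$ in $I_k$, $A_k$, $B_k$ stabilises, and (ii) for every $n$ the finite set $I_k \cap E_n$ is eventually a fixed $J_n \subseteq E_n$. Let $I := \bigcup_n J_n$ and let $A$, $B$ be the pointwise limits; the goal is to show that $I$ with bipartition $(A,B)$ witnesses the Intersection property.

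Two of the three required verifications are then essentially free. Because $M$ and $N$ are finitary and every finite subset of $I$ lies in $I_k$ for sufficiently large $k$, $I$ is $M$- and $N$-independent. For $M$-spanning fix $e \in A \cap E_n$: either $e \in I_k \cap A_k$ for infinitely many $k$ (forcing $e \in I \cap A$ after diagonalisation), or for all large $k$ there is an $M$-circuit $C_k \ni e$ with $C_k - e \subseteq I_k \cap A_k$. The direct-sum decomposition forces $C_k \subseteq E_n$, and $M_n$-independence of $C_k - e$ yields $|C_k| \leq r_n + 1$. The stabilisation of $I_k \cap E_n$ to $J_n$ then confines $C_k$ to the fixed finite set $\{e\} \cup J_n$, so by pigeonhole some $M$-circuit $C$ appears infinitely often; stabilisation of $A_k$ on the finite set $J_n$ gives $C - e \subseteq A \cap J_n \subseteq I \cap A$, whence $e$ is $M$-spanned by $I \cap A$.

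The main obstacle, where the real work of the theorem lies, is the analogous statement for $N$: that every $e \in B$ is $N$-spanned by $I \cap B$. The circuit-size bound exploited above is lost, since $N$ admits no direct-sum decomposition and can have infinite rank, so the witnessing $N$-circuits $C_k \subseteq F_k$ may grow with $k$ and never settle on a fixed finite subset of $E$. My plan for this step is to refine the construction: rather than invoking Edmonds afresh at each stage, build $(I_k, A_k, B_k)$ inductively from $(I_{k-1}, A_{k-1}, B_{k-1})$ by processing $e_{k-1}$ along a shortest augmenting alternating path in the Krogdahl-style exchange digraph of $M$ and $N$, so that modifications remain local and, for every fixed $e \in B$, the $N$-circuit certifying its spanning eventually stabilises. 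As a cleaner alternative I would try in parallel, one can dispense with compactness altogether and argue via \emph{waves}: take a Zorn-maximal pair $(X, J)$ with $J$ common independent and $J \cap X$ already $M$-spanning $X$, then use the direct-sum structure of $M$ on $E \setminus X$ to solve the residual problem one finite-rank summand at a time, extending $J$ to the desired $I$.
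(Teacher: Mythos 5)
Your compactness skeleton is sound for two of the three requirements --- common independence of the limit $I$, and the fact that $I\cap A$ spans $A$ in $M$, where the finite-rank summands really do confine the witnessing circuits to the fixed finite sets $\{e\}\cup J_n$ --- but the proposal has a genuine gap exactly where you flag it: nothing in the construction forces $I\cap B$ to span $B$ in $N$. The $N$-circuits certifying that $e$ is spanned by $I_k\cap B_k$ may grow with $k$ and drift off to infinity, and pointwise convergence of $I_k$, $A_k$, $B_k$ gives no control over them. This is not a technicality that a cleverer subsequence extraction can repair: already when $N$ (like $M$) is a direct sum of rank-one matroids, i.e.\ when both matroids are partition matroids of a countable bipartite graph, the statement is Aharoni's infinite K\"onig duality theorem, which is well known not to follow from compactness over the finite theorem. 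So the entire content of the result sits in the step you defer.

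The two remedies you sketch point in the right direction but neither is carried out, and each hides the real work. The ``local augmenting path'' idea is essentially the strategy of this paper's proof of the stronger Theorem~\ref{thm matr int 0}: one must choose the augmenting path with $\prec$-least possible starting edge, prove that the exchange digraph restricted to the set of edges reachable from a never-$N$-spanned edge eventually stabilizes (Claim~\ref{stabilazing stuff}), and show that if some edge is never $N$-spanned then these stable reachable sets assemble into a non-trivial wave contradicting $\mathsf{cond}^{+}(M,N)$ --- and the reduction to $\mathsf{cond}^{+}$ itself requires the wave and feasible-set machinery of Sections~\ref{sec waves}--\ref{sec feasible sets}. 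Your Zorn-maximal-wave alternative is the Aharoni--Ziv reduction, but ``solve the residual problem one finite-rank summand at a time'' ignores that the summands interact through $N$, which is precisely the difficulty. As it stands the proposal correctly isolates the obstacle but does not overcome it.
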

Our main result is to omit completely the extra assumption about $ M $:

\begin{thm}\label{thm matr int 0}
Let $ M $ and $ N $ be  finitary matroids on the same countable edge set. Then $ \{ M,N \} $ has the Intersection property.
\end{thm}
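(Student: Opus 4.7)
The plan is to generalise the wave-and-augmenting-path framework of Aharoni and Ziv (Theorem \ref{thm Aharoni Ziv}) so that the direct-sum-of-finite-rank hypothesis on $M$ can be dropped. Given a common independent set $I$, call a pair $(W_M, W_N)$ of disjoint subsets of $E$ a \emph{wave} if $I \cap W_M$ spans $W_M$ in $M$ and $I \cap W_N$ spans $W_N$ in $N$; a wave with $W_M \cup W_N = E$ is precisely the conclusion we want. A first step is to verify that waves relative to a fixed $I$ form a chain-complete poset under a suitable ordering, so a maximal wave always exists; closure under chains is where the finitary axiom on both $M$ and $N$ is used for the first time.

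The heart of the argument should be an \emph{augmenting lemma}: whenever $(W_M, W_N)$ is a maximal wave with respect to $I$ and there is still some $e \in E \setminus (W_M \cup W_N)$, a finite alternating structure exists along which $I$ can be replaced by a strictly larger common independent set $I'$ admitting a wave that strictly dominates the old one. One sets up the standard auxiliary digraph on $E$ whose arcs run from $f \notin I$ to the elements of its fundamental circuits $C_M(I,f)$ and $C_N(I,f)$, and argues that wave-maximality forces reachability in this digraph from the undecided region to a ``free'' element, at which point finitarity extracts a genuinely finite witnessing walk. Given this lemma, the theorem follows by an $\omega$-length recursion along a fixed enumeration $E = \{e_0, e_1, \ldots\}$: at stage $n$ we maintain a finite common independent set $I_n$ and a maximal wave $W_n$ containing all elements decided so far; either $e_n$ is already in $W_n$ (pass), or we apply the augmenting lemma, and in each case repair to a new maximal wave extending the previous commitments. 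Countability of $E$ ensures every element is addressed and the finitary axiom passes both independence and spanning through the limit.

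The main obstacle is the augmenting lemma in the absence of the finite-rank-component assumption, since fundamental $M$-circuits of elements outside $I$ may now involve arbitrarily many edges scattered across the ground set and the alternating digraph can have infinite out-degree at $M$-vertices. Overcoming this will likely require refining the notion of wave---for instance by separating $M$-waves and $N$-waves, in the spirit of Bowler and Carmesin \cite{MR3347465}, or by introducing a hierarchy that tracks how far into the alternating digraph a partial wave already reaches---so that one can run a compactness-style argument (probably a K\H{o}nig-type lemma on a judiciously pruned subdigraph) yielding a finite alternating path whenever the wave is truly maximal. Ensuring that each augmentation \emph{strictly enlarges} the wave, rather than merely shuffling its territory, so that the $\omega$-recursion actually terminates on every enumerated element, is where I would expect the subtlest combinatorial work to lie.
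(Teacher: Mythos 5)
Your proposal reproduces the general shape of the Aharoni--Ziv framework (waves, the Edmonds auxiliary digraph, an $\omega$-recursion along an enumeration of $E$), but it leaves the actual mathematical content of the theorem unproved: the ``augmenting lemma'' you postulate is precisely the point where the finite-rank hypothesis was needed in \cite{MR1678148}, and you explicitly defer its proof to an unspecified refinement of the wave notion and a K\H{o}nig-type compactness argument. Two concrete problems. First, the lemma as you state it is aimed at the wrong difficulty: when no augmenting path exists, the standard argument already yields the desired bipartition, so the genuine obstacle is not extracting a finite alternating walk but controlling an \emph{infinite} sequence of successful augmentations, whose terms need not converge to any reasonable limit object (the symmetric differences can keep disturbing every edge forever). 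Second, your waves are tied to a fixed $I$ and ordered by domination, and you give no mechanism forcing each augmentation to ``strictly enlarge'' the wave so that every $e_n$ is eventually decided; you yourself flag this as the subtlest point and do not resolve it.

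The paper's resolution is structurally different and constitutes the actual novelty. One does not try to make the common independent sets converge directly. Instead one works with \emph{feasible} sets $I$ (common independent sets with $\mathsf{cond}(M/I,N/I)$, called \emph{nice} when the largest wave of the contracted pair is trivial) and proves Lemma \ref{key lemma}: under $\mathsf{cond}^{+}(M,N)$, every edge $e$ admits a nice feasible $I$ with $e\in\mathsf{span}_N(I)$. The outer recursion then produces a genuinely $\subseteq$-increasing chain of nice feasible sets whose union is the desired $M$-independent $N$-base, finitarity handling the limit. Inside the key lemma one iterates augmenting paths chosen with $\prec$-least possible starting edge; if some edge is never $N$-spanned, the sets $E(x,n)$ of edges reachable from the unspanned region stabilize (Claim \ref{stabilazing stuff}: the paths $P_n$ never meet them, the fundamental circuits freeze, the digraphs restricted to them only grow), and their union is a non-trivial wave contradicting $\mathsf{cond}^{+}(M,N)$. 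This stabilization-to-a-wave argument, together with the feasibility bookkeeping of Lemmas \ref{nice extension} and \ref{aug nice}, is exactly the missing ingredient your proposal would need to supply.
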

Finitary matroids were not considered as an entirely satisfying  infinite generalisation of matroids because they fail 
to capture a key phenomenon of the finite theory, the duality. 
Indeed, the class of  finitary matroids is not closed under taking duals, namely the set of subsets of $ E $ avoiding some 
$ \subseteq $-maximal element of $ \mathcal{I} $ 
does not necessarily satisfy axiom \ref{item finitary}.  Rado asked in 1966 if there is a reasonable notion of infinite matroids 
admitting 
duality and minors. Among other attempts Higgs introduced \cite{MR274315} a class of structures he called ``B-matroids''. 
Oxley gave an axiomatization of B-matroids and showed that
 they are the largest class of structures satisfying axioms \ref{item fin 1}-\ref{item fin 3} and  closed under taking
duals and minors  (see \cite{MR1165540} and \cite{MR540005}). Despite these discoveries of Higgs and Oxley, the 
systematic investigation of infinite matroids started only around 2010 when Bruhn, Diestel, Kriesell, Pendavingh, Wollan  
found a 
set of cryptomorphic axioms for infinite matroids, generalising the usual independent set-, bases-, circuit-, closure- and 
rank-axioms for finite mastoids and showed that several well-known facts of the theory of finite matroids are preserved (see 
\cite{MR3045140}). 

An $ M=(E, \mathcal{I}) $ is a B-matroid (or simply matroid) if $ \mathcal{I}\subseteq \mathcal{P}(E) $ with
\begin{enumerate}
[label=(\arabic*)]
\item\label{item axiom1} $ \varnothing\in  \mathcal{I} $;
\item\label{item axiom2} $ \mathcal{I} $ is downward closed;
\item\label{item axiom3} For every $ I,J\in \mathcal{I} $ where  $J $ is $ \subseteq $-maximal in $ \mathcal{I} $ but $ I $ is 
not, there 
exists an
 $  e\in 
J\setminus I $ such that
$ I+e\in \mathcal{I} $;
\item\label{item axiom4} For every $ X\subseteq E $, any $ I\in \mathcal{I}\cap 
\mathcal{P}(X)  $ can be extended to a $ \subseteq $-maximal element of 
$ \mathcal{I}\cap \mathcal{P}(X) $.
\end{enumerate}

This more general matroid concept  gave a broader interpretation for Conjecture \ref{MIC}. Bowler and Carmesin showed in 
\cite{MR3347465} that several important conjectures  in infinite matroid theory are equivalent with Conjecture 
\ref{MIC} and gave a simpler proof for a slightly more general form of Theorem \ref{thm Aharoni Ziv}.

To state  a more general form of our main result Theorem \ref{thm matr int 0}  together with a couple of side results, we recall 
some notions. 
A matroid is called \emph{finitary} if all of its circuits are finite (equivalently: if satisfies \ref{item finitary}). The 
\emph{finitarization} of  $ M $ is a matroid on the same edge set
 whose circuits are exactly the finite circuits of $ M $.  A 
matroid $ M $ is \emph{nearly finitary} if every base of $ M $ 
can be extended to a base of its finitarization by adding finitely many edges.  A matroid is \emph{(nearly) cofinitary} if its 
dual is (nearly) finitary. The \emph{cofinitarization} of $ M $ is the dual of the finitarization of  $ M^* $.
\begin{thm}\label{thm matr int}
If $ M $ and $ N $ are matroids on a common countable edge set  where each of them is either nearly finitary or nearly 
cofinitary, then $ \{ M,N \} $ has the 
Intersection property.
\end{thm}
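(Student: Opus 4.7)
The plan is to derive Theorem \ref{thm matr int} from Theorem \ref{thm matr int 0} by reducing to the case where both matroids are finitary. Two ingredients are needed: finitarization (or cofinitarization) to replace a nearly (co)finitary matroid by a finitary approximation on the same ground set, and a finite modification argument that transfers a witness of the Intersection property from the approximation back to the original matroid.

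Suppose first that both $M$ and $N$ are nearly finitary, and let $M_{0}, N_{0}$ be their finitarizations. By Theorem \ref{thm matr int 0}, the pair $\{M_{0}, N_{0}\}$ has the Intersection property, witnessed by a common independent set $I_{0}$ and a bipartition $E = E_{M} \cup E_{N}$. The spanning conditions transfer for free: a matroid has at least all circuits of its finitarization, hence a pointwise larger closure operator, so $I_{0}\cap E_{M}$ still spans $E_{M}$ in $M$ and $I_{0}\cap E_{N}$ still spans $E_{N}$ in $N$. The only possible failure is that $I_{0}$ may not be independent in $M$ or $N$, via infinite circuits. Here the nearly finitary assumption enters: since restrictions inherit the property, $I_{0}\cap E_{M}$, being a base of $M_{0}\upharpoonright E_{M}$, is at finite symmetric difference from a base of $M\upharpoonright E_{M}$. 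A symmetric finite surgery on both sides yields $I$ (differing from $I_{0}$ in only finitely many edges on each side) that is common independent in $M$ and $N$ and still spans $E_{M}$ in $M$ and $E_{N}$ in $N$.

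The remaining cases, in which at least one of $M,N$ is nearly cofinitary, would be reduced to the nearly finitary case above by duality. If $M$ is nearly cofinitary, I would pass to $M^{*}$ (nearly finitary) and translate the Intersection statement for $\{M,N\}$ into an equivalent statement for $\{M^{*}, N\}$ by swapping the $M$-side of the bipartition with its complement; this uses the interplay of restriction, contraction and duality for B-matroids together with the fact that the cofinitarization of $M$ coincides with the dual of the finitarization of $M^{*}$. After the swap one is back in the nearly finitary regime, and a symmetric argument covers the case where $N$ is nearly cofinitary or both are.

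The main obstacle I expect is controlling the finite modification step. When finitely many edges are removed from $I_{0}\cap E_{M}$ to restore $M$-independence, one must ensure that those edges are not essential for $N$-independence or for the $N$-spanning on the other side, and conversely that the surgery on the $N$-side does not reintroduce an infinite $M$-circuit across the bipartition. Managing this cross-interaction is likely to require an iterative wave/augmenting path argument in the spirit of the techniques used for Theorem \ref{thm matr int 0}, with termination guaranteed by the finiteness of the nearly-finitary excess on each side. Stating and proving the duality reformulation cleanly, and in particular getting the transformation of the bipartition right, is the other delicate point and probably the second main technical hurdle.
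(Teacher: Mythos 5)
Your reduction for the case where both matroids are nearly finitary is essentially the paper's own argument (Proposition \ref{nearly}): pass to the finitarizations, apply the finitary intersection theorem, delete finitely many edges of the witness $I_0$ to restore independence in $M$ and $N$ (possible because a base of the finitarization exceeds a base of the original matroid by only finitely many edges), and then repair the damage by finitely many augmenting-path steps. The paper makes your ``iterative argument with termination guaranteed by the finiteness of the excess'' precise by noting that after the finite deletion the quantity $\sum_{i}r\bigl((M_i\upharpoonright E_i)/I_i\bigr)$ is a finite non-negative integer which strictly decreases with each augmentation, so the process halts at a common independent set admitting no augmenting path, and the reachability bipartition of the auxiliary digraph then witnesses the Intersection property (with a possibly different bipartition than the one you started from, which is harmless). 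Likewise, the case where both matroids are nearly cofinitary follows by dualizing both of them and using Observation \ref{MIP dual}.

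The genuine gap is the mixed case, one matroid nearly finitary and the other nearly cofinitary. Your plan to ``translate the Intersection statement for $\{M,N\}$ into an equivalent statement for $\{M^{*},N\}$'' has no basis: the Intersection property is not preserved under dualizing a single matroid. A common independent set of $M^{*}$ and $N$ bears no useful relation to a common independent set of $M$ and $N$, and the bipartition conditions do not transform into one another by complementing one side. The only available duality, Observation \ref{MIP dual}, dualizes both matroids simultaneously, and that maps the mixed case to itself (nearly finitary and nearly cofinitary are swapped between the two matroids). Nor does (co)finitarization help: the cofinitarization of the nearly cofinitary matroid is cofinitary, not finitary, so Theorem \ref{thm matr int 0} never becomes applicable. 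The paper deals with this case by invoking a separate result of Aigner-Horev, Carmesin and Fr\"{o}hlich (Theorem \ref{mixed}), proved by a compactness argument exploiting the finiteness of the $M$-circuits and $N$-cocircuits; some such independent input is required, and your proposal does not supply it.
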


For matroids $ M $ and $ N $ on the same edge set $ E $,  $ \boldsymbol{\mathsf{cond}(M,N)} $ stands for  the condition ``for 
every $ 
W\subseteq E $ for which there is a base of
$ M\upharpoonright W $
independent in  $ N.W $,   there exists a base of $ N.W $ which is independent  in  $ M\upharpoonright W $''. The next 
theorem says that $ \mathsf{cond}(M,N) $ is a necessary 
and sufficient condition for the existence of a set which is  independent in $ M $ and spanning in  $ N $. 

\begin{thm}\label{thm ind span}
Let $ M $ and $ N $ be matroids on a common countable edge 
set  such that each of them is either nearly finitary or nearly cofinitary. Then there is a base of $ N $ which is 
independent 
in $ M 
$ if and only if  $ 
\mathsf{cond}(M,N) $ holds.
\end{thm}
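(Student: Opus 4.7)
The plan is to prove the two implications separately: the sufficiency direction relies on Theorem~\ref{thm matr int} as the substantive input, while the necessity direction is essentially bookkeeping with the restriction/contraction calculus.

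For the easy direction (necessity), I would assume $B$ is a base of $N$ independent in $M$ and verify something slightly stronger than $\mathsf{cond}(M,N)$: for \emph{every} $W \subseteq E$, the set $B \cap W$ contains a base of $N.W$ which is automatically independent in $M\upharpoonright W$. First I would show that $B \cap W$ spans $W$ in $N.W$: for any $e \in W \setminus B$ the $N$-circuit inside $B + e$ sits in $(B \cap W) \cup (E \setminus W) \cup \{e\}$, witnessing $e \in \mathrm{cl}_{N.W}(B \cap W)$ via the identity $\mathrm{cl}_{N.W}(X) = \mathrm{cl}_{N}(X \cup (E \setminus W)) \cap W$. Axiom~\ref{item axiom4} then supplies a $\subseteq$-maximal $N.W$-independent subset $B'$ of $B \cap W$; by maximality every element of $(B \cap W) \setminus B'$ lies in $\mathrm{cl}_{N.W}(B')$, so $B'$ spans $W$ in $N.W$ and is therefore a base of $N.W$. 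Since $B' \subseteq B$, it inherits independence in $M\upharpoonright W$.

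For the substantive direction (sufficiency), I would feed the pair $\{M,N\}$ to Theorem~\ref{thm matr int}, obtaining a common independent set $I$ and a bipartition $E = E_M \cup E_N$ such that $I \cap E_M$ is a base of $M\upharpoonright E_M$ and $I \cap E_N$ is a base of $N\upharpoonright E_N$. A short check shows that $I \cap E_M$ is also independent in $N.E_M$: since $I = (I \cap E_M) \cup (I \cap E_N)$ is $N$-independent and $I \cap E_N$ is a base of $N\upharpoonright E_N$, the defining property of contraction gives $I \cap E_M \in \mathcal{I}(N.E_M)$. Hence $W := E_M$ satisfies the premise of $\mathsf{cond}(M,N)$, and invoking that hypothesis produces a base $B'$ of $N.E_M$ independent in $M\upharpoonright E_M$. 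My candidate is then $B := B' \cup (I \cap E_N)$; to conclude I would verify that $B$ is a base of $N$ (via the standard fact that the union of a base of $N.E_M$ with a base of $N\upharpoonright E_N$ is a base of $N$) and that $B$ is $M$-independent (extending $B'$ to a base $Z$ of $M\upharpoonright E_M$ and noting that $I \cap E_N$ is $M.E_N$-independent, so $Z \cup (I \cap E_N) \supseteq B$ is $M$-independent).

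The only non-trivial input is Theorem~\ref{thm matr int}; every other step is a routine manipulation with the restriction/contraction axioms and so should not present a real obstacle. If one conceptual point deserves flagging, it is that $\mathsf{cond}(M,N)$ is invoked precisely at the partition piece $E_M$ delivered by the intersection property, converting the ``wrong kind'' of base on $E_M$ (the $M$-restriction base $I \cap E_M$) into the ``right kind'' (the $N$-contraction base $B'$), so that it can be merged with $I \cap E_N$ into an honest base of $N$ without destroying $M$-independence.
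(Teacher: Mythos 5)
Your proposal is correct and follows essentially the same route as the paper: the paper also derives Theorem~\ref{thm ind span} from Theorem~\ref{thm matr int} via the proposition in Section~\ref{sec reductions} (Intersection property $\Rightarrow$ ($\mathsf{cond}(M,N)$ iff an $M$-independent $N$-base exists)), whose proof is exactly your sufficiency argument: $E_M$ is a wave witnessed by $I\cap E_M$, $\mathsf{cond}(M,N)$ converts it into a base $B'$ of $N.E_M$ independent in $M\upharpoonright E_M$, and $B'\cup(I\cap E_N)$ is the desired base. Your necessity check is just a fleshed-out version of what the paper dismisses as ``clearly necessary''.
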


Looking for an $ M $-independent $ N $-base can be rephrased as searching  for an $ N $-base contained in an $ M $-base. It 
seems natural to ask 
about a characterisation for having a common base.
\begin{thm}\label{thm common base}
Let $ M $ and $ N $ be   matroids on a common countable edge set such that each of them is either  finitary or  cofinitary. Then 
$ M$ and $ N $ have a common 
base  if and only if $ \mathsf{cond}(M,N)\wedge  \mathsf{cond}(N,M) $ holds.
\end{thm}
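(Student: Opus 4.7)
For the easy direction, if $B$ is a common base of $M$ and $N$ then $B$ is simultaneously an $N$-base that is $M$-independent and an $M$-base that is $N$-independent, so Theorem~\ref{thm ind span} directly supplies $\mathsf{cond}(M,N)$ and $\mathsf{cond}(N,M)$.

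For sufficiency I plan to combine Theorem~\ref{thm matr int} with the two conditions. First, apply Theorem~\ref{thm matr int} (legal since ``finitary or cofinitary'' is stronger than ``nearly finitary or nearly cofinitary'') to obtain a common independent $I$ and a bipartition $E = E_M \cup E_N$ with $I_M := I \cap E_M$ a base of $M\upharpoonright E_M$ and $I_N := I \cap E_N$ a base of $N\upharpoonright E_N$. Using the standard decomposition of a matroid base across the restriction/contraction pair at $E_M$, applied to both $M$ and $N$, one sees that $I$ is a common base exactly when, in addition, $I_N$ is a base of $M.E_N$ and $I_M$ is a base of $N.E_M$. Since $I$ is $N$-independent and $I_N$ is an $(N\upharpoonright E_N)$-base, the set $I_M$ is already $(N.E_M)$-independent by construction; the only missing property on the $E_M$-side is that $I_M$ span $E_M$ in $N.E_M$, and symmetrically for $I_N$ in $M.E_N$.

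The task thus splits, along the bipartition, into two one-sided augmentation problems: on $E_M$, enlarge the $(N.E_M)$-span of $I_M$ to cover $E_M$ while preserving its $(M\upharpoonright E_M)$-base property, and symmetrically on $E_N$. The hypotheses transfer to the sub-pairs: since $(M\upharpoonright E_M)\upharpoonright W = M\upharpoonright W$ and $(N.E_M).W = N.W$ for $W \subseteq E_M$, both $\mathsf{cond}(M\upharpoonright E_M, N.E_M)$ and its reverse follow from $\mathsf{cond}(M,N)\wedge \mathsf{cond}(N,M)$, and similarly on $E_N$. Consequently, each individual failure of spanning in $N.E_M$ can be remedied by a circuit exchange whose existence is guaranteed by Theorem~\ref{thm ind span} (or rather its proof) applied to the sub-pair. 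The main obstacle will be controlling the countable sequence of such exchanges so that the limit remains a base of $M\upharpoonright E_M$ while becoming a base of $N.E_M$ as well---this is precisely the role of the wave / augmenting-path machinery that powers Theorem~\ref{thm matr int}. Once this is carried out on each side of the bipartition, the union of the two resulting sub-common-bases is the desired common base of $M$ and $N$.
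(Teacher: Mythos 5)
Your necessity direction is fine and matches the paper: a common base is in particular an $M$-independent base of $N$ and an $N$-independent base of $M$, so Theorem~\ref{thm ind span} (or just the trivial direction of $\mathsf{cond}$) gives both conditions.

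The sufficiency direction has a genuine gap. After applying Theorem~\ref{thm matr int} you correctly observe that what is missing is that $I_M$ be spanning in $N.E_M$ and $I_N$ spanning in $M.E_N$; but the resulting subtask on $E_M$ --- turn an $(N.E_M)$-independent base of $M\upharpoonright E_M$ into a \emph{common} base of $M\upharpoonright E_M$ and $N.E_M$ --- is exactly another instance of the common-base problem for a finitary-or-cofinitary pair, not a smaller or different problem. Your plan to close it by ``a countable sequence of circuit exchanges controlled by the wave / augmenting-path machinery'' does not work as stated: that machinery is built to prove the Intersection property and Theorem~\ref{thm ind span}, i.e.\ to produce a set that is a base on one side and merely independent on the other. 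An infinite sequence of exchanges that preserves the $M\upharpoonright E_M$-base property at every finite stage need not preserve it in the limit (spanning can be lost if $M$ is finitary, independence can be lost if $M$ is cofinitary), and nothing in the paper's augmenting-path analysis controls both sides simultaneously. That this is a real obstruction and not a technicality is signalled by the paper itself: the common-base statement is \emph{consistently false} for general countable matroids and is open even for nearly finitary / nearly cofinitary ones, whereas the Intersection property does hold there by Theorem~\ref{thm matr int}. So no argument that only uses Theorem~\ref{thm matr int} plus generic exchange arguments can suffice.

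The paper's actual route avoids your bipartition entirely: apply Theorem~\ref{thm ind span} to $(M,N)$ to get a base of $N$ contained in a base of $M$, apply it to $(N,M)$ to get a base of $M$ contained in a base of $N$, and then invoke Theorem~\ref{thm CB} (Corollary~1.4 of \cite{erde2019base}), which states that for finitary-or-cofinitary matroids the existence of such a nested pair of bases in both directions already forces a common base. That external result is the key ingredient your proposal is missing, and it is also the reason the hypothesis cannot currently be weakened to ``nearly finitary or nearly cofinitary.''
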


Maybe surprisingly, the generalization of Theorem \ref{thm common base} for arbitrary countable matroids is consistently 
false
(take $ U $ and $ U^{*} $ from Theorem 5.1 of \cite{erde2019base}).
In  contrast to our other results, we do not even know if ``finitary or  cofinitary'' can be relaxed to ``nearly finitary or  
nearly 
cofinitary'' in Theorem 
\ref{thm common base}.

It is worth  mentioning that if $ M $ is nearly finitary and $ N $ is nearly cofinitary with $ E(M)=E(N)  $, then  $ \{ M, N \} $ 
has the Intersection 
property (regardless of the size $ \left|E(M)\right| $):

\begin{thm}[Aigner-Horev, Carmesin and Frölich; Theorem 1.5 in \cite{MR3784779}]\label{mixed}
If $ M $ is a nearly 
finitary and $ N $ is a nearly cofinitary matroid 
on a common edge set, then $ \{ M,N \} $ has the Intersection property.
\end{thm}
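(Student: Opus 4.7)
The plan is to attack the statement by a wave-theoretic argument, inspired by the Aharoni-Berger proof of the Erd\H{o}s-Menger theorem and its matroid-intersection analogues. Declare a pair $ (W, J) $ with $ J\subseteq W\subseteq E $ to be a \emph{wave} for $ \{M,N\} $ if $ J $ is common independent in $ \{M,N\} $, spans $ W $ in $ N $, and $ W $ is appropriately ``closed off'' from its complement in $ M $. Unions of $ \subseteq $-chains of waves are easily checked to be waves, so Zorn's lemma produces a maximum wave $ (W^{*}, J^{*}) $.

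Passing to the residue $ E':=E\setminus W^{*} $ and to suitable minors of $ M $ and $ N $ on $ E' $, the original intersection problem reduces to finding a common base of the two restricted matroids on $ E' $: glueing such a common base to $ J^{*} $ and taking the bipartition $ (W^{*},E') $ then yields the required witness. By maximality of $ W^{*} $ the residue admits no further nontrivial wave, and in this ``no-wave'' regime the plan is an augmenting-path construction, starting from an arbitrary common independent set in the residue and improving it by exchanges along $ M $-circuits and $ N $-cocircuits. Near-finitarity of $ M $ and near-cofinitarity of $ N $ guarantee that, up to a bounded finite defect, all these circuits and cocircuits are finite, which is what should make the augmentation terminate.

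The main obstacle will be controlling the augmentation simultaneously for both matroids. Finite length of individual augmenting paths does not automatically give convergence of the iterative process; one really needs the maximum wave $ W^{*} $ to absorb every alternating-path obstruction, so that the residue is ``augmenting-path-complete'' in the appropriate dual sense. The technical heart of the argument is therefore the verification that the no-wave condition together with near-(co)finitarity implies the existence of a common base in the residue, and this is where the dual interplay between $ M $ and $ N $ — finite circuits on one side, finite cocircuits on the other — becomes essential, because the augmenting paths alternate between the two types of obstructions and each step must consume only a finite portion of the corresponding defect.
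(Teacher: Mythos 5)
This theorem is not proved in the paper at all: it is quoted as Theorem~1.5 of Aigner-Horev, Carmesin and Fr\"ohlich \cite{MR3784779}, so there is no internal proof to compare against. Judged on its own terms, your proposal has two genuine gaps. First, the reduction is wrong in a small but consequential way: after splitting off a maximum wave $W^{*}$, the Intersection property does not require a \emph{common base} of the residual matroids on $E'=E\setminus W^{*}$; it requires a base of $N-W^{*}$ that is merely \emph{independent} in $M/W^{*}$ (spanning in $N$ only). Demanding a common base is strictly stronger and fails already for finite matroids of different ranks, so the target of your residue analysis is not the right one.

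Second, and more seriously, the entire difficulty of the theorem is concentrated in the step you explicitly defer: you acknowledge that finiteness of individual augmenting paths does not give convergence of the iteration, and then assert without argument that the no-wave condition plus near-(co)finitarity closes this gap. It does not, at least not by iteration from an \emph{arbitrary} common independent set: there is no finite quantity that each augmentation decreases, so the process may run forever with no usable limit (this is exactly the obstruction the present paper spends Sections~\ref{sec feasible sets} and~\ref{sec proof of main res} overcoming in the finitary-finitary case). The known proof of the mixed case avoids iteration entirely: because $M$-circuits and $N$-cocircuits are finite up to a uniformly finite defect, a compactness/Zorn argument produces a single $M$-independent set $I$ whose $N$-span is maximal among $M$-independent sets; after trimming $I$ to be $N$-independent, this maximality directly rules out any augmenting path for $I$, and the absence of augmenting paths yields the bipartition via the reachability sets in $D(I)$. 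Note also that the finite-defect counting you gesture at is the mechanism of Proposition~\ref{nearly} of this paper, but there it works only because one starts from a solution for the finitarizations, which bounds the total residual corank by a finite number in advance; your setup provides no such bound.
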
 

The paper is structured as follows. After  introducing a few notation  in the next section we recall the augmenting path 
method in Edmonds' proof of the Matroid Intersection Theorem in Section \ref{sec augpath}  and analyse the changes of the 
auxiliary digraph 
after an augmentation. In Section \ref{sec waves} we remind the so called ``wave''  technique developed by Aharoni and 
prove some 
properties of waves.
 We show in Section \ref{sec reductions} that the restriction of Theorem \ref{thm ind span} to finitary matroids implies all 
 the theorems we are 
intended to prove 
and  from that point we focus only on this theorem. In Section \ref{sec feasible sets} we investigate feasible sets, i.e., 
common independent sets  
 $ I $ of $ M $ and $ N $
satisfying $ \mathsf{cond}(M/I,N/I) $. The intended meaning of  ``feasible''  is being extendable to an $ M $-independent 
base of $ N $. The main result is proved in Section \ref{sec proof of main res} and its core is  
Lemma \ref{key lemma} which enables us to find a feasible extension of a given feasible set which spans in $ N $ a 
prescribed edge.
\section{Notation and basic facts}\label{sec notation}

In this section we introduce some notation and recall some basic facts about 
matroids that  we will use later without further explanation. For more details we refer to  \cite{nathanhabil}.

A pair ${M=(E,\mathcal{I})}$ is a \emph{matroid} if ${\mathcal{I} \subseteq \mathcal{P}(E)}$ satisfies  the axioms 
\ref{item axiom1}-\ref{item axiom4}.
The sets in~$\mathcal{I}$ are called \emph{independent} while the sets in ${\mathcal{P}(E) \setminus \mathcal{I}}$ are 
\emph{dependent}. An $ e\in E $ is a \emph{loop} if $ \{ e \} $ is dependent.
If~$E$ is finite, then  \ref{item axiom1}-\ref{item axiom3}  are equivalent to the usual axiomisation of matroids in terms of 
independent sets  (while \ref{item axiom4} is redundant).
The maximal independent sets are called \emph{bases} and the minimal dependent sets referred to as \emph{circuits}. If $ M $ 
admits a finite base, then all the bases have the same size which 
is the rank $ \boldsymbol{r(M)} $ of $ M $ otherwise we let $ r(M):=\infty $. 
Every dependent set contains a circuit (which fact is not obvious if $ E $ is infinite). If $ C_1,C_2 $ are circuits with $ e\in 
C_1\setminus C_2 $ and $f\in  C_1\cap C_2 $, then there is a circuit $ C_3 $ with $e\in C_3\subseteq C_1\cup C_2-f $. We 
say that $ C_3 $ is obtained by \emph{strong circuit elimination} from $ C_1 $ and $ C_2 $ keeping $ e $ and removing $ f 
$. The 
\emph{dual} of a 
matroid~${M}$ is the 
matroid~${M^*}$ with $ E(M^*)=E(M) $ whose bases are the complements of 
the bases of~$M$. For an  ${X \subseteq E}$, ${\boldsymbol{M  \upharpoonright X} :=(X,\mathcal{I} 
\cap \mathcal{P}(X))}$ is a matroid and it  is called the \emph{restriction} of~$M$ to~$X$.
We write ${\boldsymbol{M - X}}$ for $ M  \upharpoonright (E\setminus X) $  and call it the minor obtained by the 
\emph{deletion} of~$X$. 
The \emph{contraction} of $ X $ in $ M $ and the contraction of $ M $ onto $ X $ are 
${\boldsymbol{M/X}:=(M^* - X)^*}$ and $\boldsymbol{M.X}:= M/(E\setminus X) $ respectively.  
Contraction and deletion commute, i.e., for 
disjoint 
$ X,Y\subseteq E $, we have $ (M/X)-Y=(M-Y)/X $. Matroids of this form are the  \emph{minors} of~$M$. 
If $ I $ is independent in $ M $  but $ I+e $ is dependent for some $ e\in E\setminus I $  then there is a unique 
circuit   $ \boldsymbol{C_M(e,I)} $ of $ M $ through $ e $ contained in $ I+e $.  We say~${X 
\subseteq E}$ \emph{spans}~${e \in E}$ in matroid~$M$ if either~${e \in X}$ or there exists a circuit~${C 
\ni e}$ with~${C-e \subseteq X}$. 
We denote the set of edges spanned by~$X$ in~$M$ by~$\boldsymbol{\mathsf{span}_{M}(X)}$. 
An ${S \subseteq E}$ is \emph{spanning} in~$M$ if~${\mathsf{span}_{M}(S) = E}$.

\section{Augmenting paths}\label{sec augpath}
The Matroid Intersection Theorem states (using our terminology) that every pair of  matroids on the same finite edge set  
has 
the Intersection 
property. It is a fundamental tool in combinatorial optimization and has a great importance since it has been discovered by 
Edmonds \cite{edmonds2003submodular}. The polynomial algorithm in Edmonds' 
proof finds a maximal sized common 
independent set together with a bipartition  witnessing  optimality. It improves a common independent set iteratively via 
augmenting 
paths taken in an auxiliary digraph. 

In the infinite case these augmenting paths are working in the same way and will play an important 
role in our proof. 
However, they are insufficient alone to prove our main result. Indeed, applying augmenting paths recursively yields a sequence 
of common independent sets where a reasonable limit object cannot be guaranteed in general. In this 
subsection we introduce our 
terminology about augmenting paths and prove some properties which were irrelevant for Edmonds' proof but  are 
crucial for our arguments.

Let $ M $ and $ N $ be fixed arbitrary matroids on the same edge set $ E $. For a common independent set $ I $, let 
$ \boldsymbol{D(I)} $ be a digraph on $ E $ with the following arcs. For  $ e\in I $ and $ f\in E\setminus I $,  
$ ef\in D(I) $  if   $ f\in \mathsf{span}_N(I) $ with $ e\in C_N(f,I) $ and $ fe\in D(I) $   if   $ f\in \mathsf{span}_M(I) 
$ with 
$ e\in C_M(f,I) $.   An \emph{augmenting path}
for $ I $ is a $ \subseteq $-minimal  $ P\subseteq E $ of odd size admitting a linear ordering
$ P=\{ x_0,\dots,x_{2n} \}$, for which
\begin{enumerate}
\item $ x_0\in E\setminus \mathsf{span}_N(I)$,
\item $ x_{2n}\in E\setminus \mathsf{span}_M(I) $,
\item $ x_kx_{k+1} \in D(I) $ for $ k<2n $.
\end{enumerate}
Observe that  each $ x_k $ with $ 0<k<2n $ is spanned by $ I $ in both matroids. Furthermore,  by the minimality of $ P $ there 
cannot be 
 $ k+1<\ell  $ with $ x_kx_{\ell}\in D(I) $ (i.e., there are no `jumping arcs'). Therefore the linear order witnessing that $ P $ 
 is 
 an augmenting path for 
$ I $ is unique. If there is no augmenting path 
for 
$ I $, then 
the set 
$ E_M $ of elements reachable from
$E\setminus \mathsf{span}_N(I) $ in $ D(I) $ together with $ E_N:=E\setminus E_M $ 
witnessing the Intersection property of $ \{ N,M \} $.

Let an augmenting path $ P=\{ x_0,\dots,x_{2n} \}$ for $ I $ be fixed.

\begin{lem}\label{arc remain lemma}
If  $ P $ contains neither  $e$ nor any of its out-neighbours with respect to $ D(I) $, then 
$ ef \in D(I \vartriangle P) $ whenever $ef\in  D(I) $.
\end{lem}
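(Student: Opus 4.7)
The plan is to split the verification by the orientation of the arc $ef$. In the $N$-arc case—$e\in I$, $f\in E\setminus I$, $f\in\mathsf{span}_N(I)$, $e\in C_N(f,I)$—I will work out the argument in detail; the $M$-arc case is symmetric under $M\leftrightarrow N$. Set $I':=I\vartriangle P$. The memberships $e\in I'$ and $f\notin I'$ follow immediately from the hypothesis: $e\notin P$ is given, and $f\notin P$ because $f$ is itself an out-neighbour of $e$. Granting that $I'$ is $N$-independent (a standard consequence of augmenting along $P$), what remains is the conjunction
\[
f\in\mathsf{span}_N(I')\quad\text{and}\quad I'-e+f\text{ is }N\text{-independent},
\]
which by the uniqueness of fundamental circuits is equivalent to $e\in C_N(f,I')$, i.e.\ to $ef\in D(I')$.

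For $f\in\mathsf{span}_N(I')$, the hypothesis on out-neighbours of $e$ is not even needed; I would prove the stronger $\mathsf{span}_N(I)\subseteq\mathsf{span}_N(I')$ by downward induction on $k\in\{1,\dots,n\}$, showing $x_{2k-1}\in\mathsf{span}_N(I')$. The minimality of $P$ (no forward-jumping $N$-arcs) forces
\[
C_N(x_{2k},I)\cap\{x_1,x_3,\dots,x_{2n-1}\}\subseteq\{x_{2k-1}\}\cup\{x_{2k'-1}:k'>k\},
\]
so the remaining members of $C_N(x_{2k},I)-x_{2k-1}$ lie either in $(I\setminus\{x_1,\dots,x_{2n-1}\})\cup\{x_{2k}\}\subseteq I'$ or, by induction, already in $\mathsf{span}_N(I')$; hence $x_{2k-1}\in\mathsf{span}_N(I')$. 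After completing the induction $I\subseteq\mathsf{span}_N(I')$ and in particular $f\in\mathsf{span}_N(I)\subseteq\mathsf{span}_N(I')$.

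The other assertion, that $I'-e+f$ is $N$-independent, is where the hypothesis on out-neighbours of $e$ enters essentially. Arguing by contradiction, let $C\subseteq I'-e+f$ be an $N$-circuit. Then $f\in C$ (since $I'-e$ is $N$-independent), and $C$ must meet $\{x_0,x_2,\dots,x_{2n}\}$, for otherwise $C\subseteq I-e+f$, contradicting the $N$-independence of $I-e+f$ which itself follows from $e\in C_N(f,I)$. Pick $x_{2k}\in C$ with $k$ maximum. If $k=0$ then $C-x_0\subseteq I-e+f$, so $x_0\in\mathsf{span}_N(I-e+f)=\mathsf{span}_N(I)$, contradicting $x_0\notin\mathsf{span}_N(I)$. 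If $k\geq 1$, the hypothesis forces $e\notin C_N(x_{2k},I)$; strong circuit elimination applied to $C$ and $C_N(x_{2k},I)$ (both containing $x_{2k}$, with $f\in C\setminus C_N(x_{2k},I)$) yields an $N$-circuit $C^*\subseteq(C\cup C_N(x_{2k},I))-x_{2k}$ still containing $f$ and still avoiding $e$. Because $C_N(x_{2k},I)-x_{2k}\subseteq I$ contributes no new element of $\{x_0,x_2,\dots,x_{2n}\}$, the intersection $C\cap\{x_0,x_2,\dots,x_{2n}\}$ strictly shrinks on passing from $C$ to $C^*$. Iterating, we either fall into the $k=0$ case or reach a circuit disjoint from $\{x_0,x_2,\dots,x_{2n}\}$ and therefore contained in $I-e+f$; either alternative is a contradiction.

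The principal obstacle is the bookkeeping in this iterated strong-circuit-elimination: one must check that each elimination preserves $f$ in the circuit (true because $f\notin I+x_{2k}$, hence $f\notin C_N(x_{2k},I)$) and introduces no new even-indexed $P$-vertex (true because $C_N(x_{2k},I)-x_{2k}\subseteq I$). Once those are secured, the monovariant $|C\cap\{x_0,x_2,\dots,x_{2n}\}|$ strictly decreases at every step, the iteration terminates in finitely many steps, and the contradiction is obtained; the $M$-arc case then goes through mutatis mutandis.
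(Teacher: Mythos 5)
Your $N$-arc case ($e\in I$) is correct, and it takes a genuinely different route from the paper's. The paper argues by a forward induction along $P$, introducing the intermediate sets $I_k=I+x_0-x_1+\cdots+x_{2k}$ and maintaining the invariant $e\in C_N(f,I_k)$, with one strong circuit elimination per step; a byproduct of that induction is Corollary \ref{aug what span}. You instead work directly with $I':=I\vartriangle P$, reduce $ef\in D(I')$ to the conjunction ``$f\in\mathsf{span}_N(I')$ and $I'-e+f$ is $N$-independent'', prove the first part by a separate downward induction (essentially reproving half of Corollary \ref{aug what span}), and refute a hypothetical circuit $C\subseteq I'-e+f$ by iterated strong circuit elimination against the circuits $C_N(x_{2k},I)$, using $\lvert C\cap\{x_0,x_2,\dots,x_{2n}\}\rvert$ as a terminating monovariant. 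The hypothesis on out-neighbours enters in exactly the same place in both proofs: $e\notin C_N(x_{2k},I)$, since otherwise $x_{2k}\in P$ would be an out-neighbour of $e$. Your bookkeeping is sound: $f$ survives each elimination because $f\notin I+x_{2k}$, no new even-indexed vertex appears because $C_N(x_{2k},I)-x_{2k}\subseteq I$, the terminal circuit lies in $I-e+f$ (not merely $I'-e+f$, since odd-indexed vertices may have been reintroduced, but those lie in $I$), and the $x_0$-case correctly uses $\mathsf{span}_N(I-e+f)=\mathsf{span}_N(I)$. Taking the $N$-independence of $I'$ as standard is acceptable; the paper happens to derive it inside the same induction.

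The one genuine flaw is your dismissal of the $M$-arc case ($e\notin I$, $f\in C_M(e,I)-e$) as ``symmetric under $M\leftrightarrow N$''. It is not: swapping $M$ and $N$ reverses all arcs of $D(I)$ and reverses $P$, so the arc $ef$ becomes an arc \emph{into} $e$, while the lemma's hypothesis controls only the \emph{out}-neighbours of $e$; a literal mirroring of your elimination argument would need $f\notin C_M(x_{2k},I)$ for the eliminated vertices, which the hypothesis does not provide. Fortunately this case needs none of your machinery: the out-neighbours of $e\notin I$ are precisely $C_M(e,I)-e\subseteq I$, so the hypothesis says $P\cap C_M(e,I)=\varnothing$, whence $C_M(e,I)-e\subseteq I\vartriangle P$ and $C_M(e,I)=C_M(e,I\vartriangle P)$ by uniqueness of fundamental circuits — which is exactly the paper's two-line treatment. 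Replace the symmetry claim by this direct observation and your proof is complete.
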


\begin{proof}
For an $ e\in E\setminus I $, its out-neighbours are $ C_M(e,I)-e $ (or $ \varnothing $ if $ e\notin \mathsf{span}_M(I) $ which 
case is irrelevant). By 
assumption $ 
P\cap C_M(e,I)=\varnothing $ and 
therefore $C_M(e,I)-e\subseteq  I \vartriangle P$ thus $ C_M(e,I)=C_M(e,I \vartriangle P) $. This means by definition that $ e $ 
has the same out-neighbours in 
$ D(I) $ and $ D(I \vartriangle P) $.

Assume now that $ e\in I $ and $ ef\in D(I) $ (i.e., $ e\in C_N(f,I) $)  for some $ f $.
For $ k\leq n $, let us denote 
$ I+x_0-x_1+x_2-\hdots -x_{2k-1}+x_{2k} $ by $ I_k $. Observe that
$ {I_n=I \vartriangle P} $. We show by induction on $ k $ that $ I_k $ is $ N $-independent and
$ {e\in C_N(f, I_k)} $. Since 
$ I+x_0 $ is $ N $-independent by definition and $ x_0\neq f $ by assumption, we obtain $ C_N(f,I)=C_N(f,I_0) $.  Suppose 
that 
we already know the statement for some  $ k<n $. We have $ C_N(x_{2k+2},I_k)=C_N(x_{2k+2},I)\ni x_{2k+1} $ because 
there is no jumping arc in the augmenting path. It follows that  $ I_{k+1} $ is $ N $-independent.
If $ x_{2k+1}\notin C_N(f, I_k) $ then $ C_N(f, I_k)=C_N(f, I_{k+1}) $ 
and the induction step is done. Suppose that   $ x_{2k+1}\in C_N(f, I_k) $. Note that $e\notin C_N(x_{2k+2},I) $ since 
otherwise $P$ would contain the out-neighbour $x_{2k+2} $ of $ e $ in $ D(I) $. We apply strong circuit elimination 
with  $ C_N(f, I_k) $ and 
$ C_N(x_{2k+2},I_k) $ keeping $ e  $ and removing $ x_{2k+1} $. The resulting circuit $ C\ni e $ can have
at most one element out of $ I_{k+1} $, namely $ f $. Since $ I_{k+1} $ is $ N $-independent, there must be  at least one 
such 
an element and therefore $ C=C_N(f,I_{k+1}) $.
\end{proof}

\begin{cor}\label{aug what span}
$\mathsf{span}_N(I \vartriangle P) =\mathsf{span}_N(I+x_0)$ and
$  \mathsf{span}_M(I \vartriangle P) =\mathsf{span}_M(I+x_{2n}) $.
\end{cor}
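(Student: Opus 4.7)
By the symmetric roles of the two matroids (swap $M$ and $N$ and reverse the ordering of $P$), it suffices to prove the first equality $\mathsf{span}_N(I\vartriangle P) = \mathsf{span}_N(I+x_0)$; the second then follows verbatim.

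The plan is to reuse the telescoping sequence $I_k := I+x_0-x_1+x_2-\dots-x_{2k-1}+x_{2k}$ from the proof of Lemma~\ref{arc remain lemma}, which interpolates between $I_0 = I+x_0$ and $I_n = I\vartriangle P$, and to show that $\mathsf{span}_N(I_{k+1}) = \mathsf{span}_N(I_k)$ for every $k<n$. Once this is established, chaining the equalities gives $\mathsf{span}_N(I\vartriangle P) = \mathsf{span}_N(I_0)= \mathsf{span}_N(I+x_0)$.

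For the step from $I_k$ to $I_{k+1}$, the ingredients are already produced in the proof of Lemma~\ref{arc remain lemma}: each $I_k$ is $N$-independent, and $C_N(x_{2k+2},I_k) = C_N(x_{2k+2},I) \ni x_{2k+1}$. Indeed, the arc $x_{2k+1}x_{2k+2}\in D(I)$ gives $x_{2k+1}\in C_N(x_{2k+2},I)$, while the absence of jumping arcs (guaranteed by the minimality of $P$) prevents any $x_{2j+1}$ with $j<k$ from lying in $C_N(x_{2k+2},I)$. Hence $C_N(x_{2k+2},I) \subseteq I_k+x_{2k+2}$, and by $N$-independence of $I_k$ this circuit is exactly $C_N(x_{2k+2},I_k)$. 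The transition $I_{k+1} = I_k - x_{2k+1} + x_{2k+2}$ is therefore an exchange along an $N$-circuit, which preserves $\mathsf{span}_N$.

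The only slightly delicate point is the identification $C_N(x_{2k+2},I_k) = C_N(x_{2k+2},I)$, but as sketched above this is an immediate consequence of the no-jumping-arcs property that was already exploited in the proof of Lemma~\ref{arc remain lemma}. Consequently the present corollary is essentially a bookkeeping observation on top of that lemma rather than a separate argument; the main conceptual content is the control on the $N$-circuits $C_N(x_{2k+2},I_k)$, which the augmenting-path structure supplies for free.
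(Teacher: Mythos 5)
Your proposal is correct and follows essentially the same route as the paper: both arguments telescope along the sequence $I_k$ from the proof of Lemma~\ref{arc remain lemma} and observe that each step $I_{k+1}=I_k-x_{2k+1}+x_{2k+2}$ is an exchange along the circuit $C_N(x_{2k+2},I_k)\ni x_{2k+1}$, hence span-preserving. The extra detail you supply on identifying $C_N(x_{2k+2},I_k)$ with $C_N(x_{2k+2},I)$ via the no-jumping-arcs property is exactly the justification already present in that lemma's proof.
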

\begin{proof}
In the proof of Lemma \ref{arc remain lemma}, $ I_{k+1} $ is obtained 
from $ I_k $ by replacing  $ x_{2k+1}\in I_k $ by $ x_{2k+2} $ for which
$ x_{2k+1}\in C_N(x_{2k+2}, I_k) $ thus $ \mathsf{span}_{N}(I_k)=\mathsf{span}_{N}(I_{k+1})  $. Since
$ I_0=I+x_0 $ and $ I_n=I \vartriangle P $, $\mathsf{span}_N(I \vartriangle P) =\mathsf{span}_N(I+x_0)$ follows. The proof 
of the second part goes similarly.
\end{proof}

\begin{obs}\label{arc remain fact}
If $ ef \in D(I) $ and $ J\supseteq I $ is a common independent set of $ N $ and $ M $ with 
$ \{ e,f \}\cap J=\{ e,f \}\cap I $, then $ ef \in D(J) $ (the same circuit is the witness).
\end{obs}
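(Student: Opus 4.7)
The plan is to argue directly from the definition of $D(I)$, using the fact that a fundamental circuit is determined by its ambient independent set, so passing from $I$ to a larger common independent $J$ that does not touch the endpoints outside of $I$ cannot destroy it.

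First I would split into two cases according to the direction of the arc $ef$. Suppose $e\in I$ and $f\in E\setminus I$, so the witness for $ef\in D(I)$ is that $f\in \mathsf{span}_N(I)$ with $e\in C_N(f,I)$. The hypothesis $\{e,f\}\cap J = \{e,f\}\cap I$ together with $J\supseteq I$ gives $e\in J$ and $f\notin J$. Then
\[
C_N(f,I) \subseteq I+f \subseteq J+f,
\]
so $J+f$ contains the $N$-circuit $C_N(f,I)$, in particular $f\in\mathsf{span}_N(J)$. Since $J$ is $N$-independent, $J+f$ contains a \emph{unique} $N$-circuit through $f$, namely $C_N(f,J)$; comparing with the displayed inclusion forces $C_N(f,J)=C_N(f,I)$, hence $e\in C_N(f,J)$ and $ef\in D(J)$.

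The other case, $f\in I$ and $e\in E\setminus I$, is completely symmetric: now the witness is $e\in\mathsf{span}_M(I)$ with $f\in C_M(e,I)$, and the hypothesis gives $f\in J$ and $e\notin J$. The same chain $C_M(e,I)\subseteq I+e\subseteq J+e$ together with $M$-independence of $J$ yields $C_M(e,J)=C_M(e,I)\ni f$, so $fe\in D(J)$, which is what is meant by ``$ef\in D(J)$'' in this direction.

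There is no real obstacle: the only point to verify is the uniqueness of the fundamental circuit, which is a standard fact recalled in Section~\ref{sec notation} (every dependent set contains a circuit, and in an independent-plus-one configuration it is unique by strong circuit elimination). The parenthetical remark in the statement, that ``the same circuit is the witness'', is exactly the content of the identity $C_N(f,J)=C_N(f,I)$ (resp.\ $C_M(e,J)=C_M(e,I)$) produced above.
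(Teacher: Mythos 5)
Your proof is correct and matches the paper's (implicit) justification exactly: the paper states this as an observation whose entire content is the parenthetical ``the same circuit is the witness,'' and your argument via $C_N(f,I)\subseteq I+f\subseteq J+f$ plus uniqueness of the fundamental circuit is precisely that. The only blemish is the harmless notational wobble at the end of your second case, where you write $fe\in D(J)$ before clarifying that the arc in question is $ef$ with $e\notin I$ and $f\in I$; the mathematics is fine.
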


\section{Waves}\label{sec waves}

Waves were introduced by Aharoni to solve matching problems in infinite bipartite graphs. These techniques turned out to be 
useful in the 
proof of the Erdős-Menger Conjecture by Aharoni and Berger \cite{aharoni2009menger} and in the already mentioned result 
\cite{MR1678148} about the 
Matroid Intersection Conjecture. Let $ M $ and $ N $ be arbitrary matroids on the same edge set $ E $.
An $ (M,N) $-\emph{wave} is a $ W\subseteq E $ such that there is a base of $ M\upharpoonright W $ which is independent in  
$ N.W $. If $ (M,N) $ is clear from the context  we write simply wave. A set $ L $ of $ M $-loops is a wave witnessed by 
$ \varnothing $. We call such a wave \emph{trivial}.

\begin{prop}\label{wave union}
The union of arbitrary many waves is a wave.
\end{prop}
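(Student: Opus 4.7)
The plan is to first establish the two-wave case, then bootstrap to arbitrary families via Zorn's Lemma.

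For two waves $W_1$ and $W_2$ with witnesses $B_1$ and $B_2$, I would apply axiom (4) to extend $B_1$ inside $W_1 \cup W_2$ to a base $B$ of $M \upharpoonright (W_1 \cup W_2)$; since $B_1$ already $M$-spans $W_1$, the new elements must satisfy $B \setminus B_1 \subseteq W_2 \setminus W_1$ (otherwise $B_1 + e$ for some $e \in W_1 \setminus B_1$ would contain the $M$-circuit $C_M(e, B_1)$, contradicting $M$-independence of $B$). The remaining task is to verify that $B$ is independent in $N.(W_1 \cup W_2)$. Concretely, I would fix a base $J$ of $N \upharpoonright (E \setminus (W_1 \cup W_2))$ and extend it to bases $J_i$ of $N \upharpoonright (E \setminus W_i)$ for $i=1,2$, and then show $B \cup J$ is $N$-independent. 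Assuming towards a contradiction an $N$-circuit $C \subseteq B \cup J$, the $N$-independence provided by the wave property of $W_1$ (after aligning its witness with $J_1$ via a base exchange) forces $C$ to meet $B \setminus B_1 \subseteq W_2 \setminus W_1$; then the analogous property for $W_2$ combined with strong circuit elimination against the fundamental $N$-circuits from the $W_2$-witness yields a smaller $N$-dependent subset of $B \cup J$, iterating to a contradiction.

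For arbitrary families $\{W_\alpha\}_{\alpha \in A}$, I would apply Zorn's Lemma to the poset of pairs $(V, B_V)$ where $V \subseteq \bigcup_\alpha W_\alpha$ is a wave and $B_V$ is a witness for $V$, ordered by $(V, B_V) \leq (V', B_{V'})$ iff $V \subseteq V'$ and $B_V \subseteq B_{V'}$. Any maximal element $(W^*, B^*)$ must contain every $W_\alpha$: otherwise, the two-wave case produces a strictly larger wave $W^* \cup W_\alpha$ with a witness extending $B^*$, contradicting maximality. Combined with $W^* \subseteq \bigcup_\alpha W_\alpha$, this yields $W^* = \bigcup_\alpha W_\alpha$, which is therefore a wave.

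The two principal obstacles are: first, organising the strong-circuit-elimination argument in the two-wave step so that the asymmetric roles of $B_1$ (preserved inside $B$) and $B_2$ (used only through fundamental $N$-circuits) are handled cleanly, including the base-exchange adjustments needed to align the various auxiliary bases of $N$-restrictions; second, verifying the chain condition in the Zorn step, i.e., that a nested chain of waves has a wave union. For the chain condition the naive witness $\bigcup_\iota B_{V_\iota}$ may fail $M$-independence when $M$ carries infinite circuits, so I would instead use axiom (4) to pick a maximal $M$-independent subset of the union that is compatible with the chain's witnesses, and verify $N$-contraction-independence via a further limit argument along the chain. I expect the chain condition to be the more delicate of the two obstacles.
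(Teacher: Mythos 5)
There is a genuine gap in the two-wave step: a base $B$ of $M\upharpoonright (W_1\cup W_2)$ obtained by extending $B_1$ \emph{arbitrarily} (via axiom (4)) need not be independent in $N.(W_1\cup W_2)$, and no amount of circuit elimination can save this, because the extension may pick elements of $W_2\setminus W_1$ that have nothing to do with $B_2$. Concretely, take $E=\{a,b,c\}$, let $M$ have $\{b,c\}$ as its unique circuit with $a$ free, and let $N$ have $c$ as a loop with $\{a,b\}$ independent. Then $W_1=\{a\}$ is a wave with witness $B_1=\{a\}$, and $W_2=\{b,c\}$ is a wave with witness $B_2=\{b\}$. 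Extending $B_1$ to a base of $M\upharpoonright E$ may yield $B=\{a,c\}$, whose new element lies in $W_2\setminus W_1$ as you require, yet $B$ is dependent in $N.E=N$ since $c$ is an $N$-loop. The witness for $W_1\cup W_2$ must be manufactured from \emph{both} witnesses: the correct set is $B_1\cup(B_2\setminus W_1)$, which $M$-spans $W_1\cup W_2$ (because $B_2\setminus W_1$ spans $W_2\setminus W_1$ in $M/W_1$) and whose complement in $W_1\cup W_2$ is $N^*$-spanning there; one then shrinks to an $M$-base if necessary. Your second obstacle, the chain condition in the Zorn argument, is not a side issue but essentially the whole content of the proposition at limit stages; your proposed ``further limit argument along the chain'' is exactly what needs to be supplied, not a tool you already have. (It does go through for chains once one observes that $B_{V'}\cap V=B_V$ for $V\subseteq V'$ in your order, since $B_{V'}$ is $M$-independent and $B_V$ already $M$-spans $V$; but this argument is absent from the proposal.)

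For comparison, the paper avoids both issues with a single transfinite recursion over an arbitrary (well-ordered) family: it sets $B_{<\beta+1}:=B_{<\beta}\cup(B_\beta\setminus W_{<\beta})$ and takes unions at limits, then proves by induction that $B_{<\alpha}$ is spanning in $M\upharpoonright W_{<\alpha}$, and obtains independence in $N.W_{<\alpha}$ by running the \emph{same} induction for the dual statement that $W_{<\alpha}\setminus B_{<\alpha}$ is spanning in $N^*\upharpoonright W_{<\alpha}$. If you repair your two-wave witness as above and carry out the chain argument, your Zorn route can be made to work, but it would end up reproving the paper's recursion in a more roundabout way.
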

\begin{proof}
Suppose that $ W_{\beta} $ is a wave  for $ \beta<\kappa $ and let 
$W_{<\alpha}:=\bigcup_{\beta<\alpha}W_{\beta}  $ for $ \alpha\leq \kappa $. We  fix a base 
$ B_{\beta} \subseteq W_{\beta} $
of $ M\upharpoonright W_{\beta} $ which is independent in $ N.W_{\beta} $.  Let us define 
$ B_{<\alpha} $  by transfinite recursion for $ \alpha\leq \kappa $ as follows.

\[B_{<\alpha}:= \begin{cases} \varnothing &\mbox{if } \alpha=0 \\
B_{<\beta}\cup (B_\beta \setminus W_{<\beta}) & \mbox{if } \alpha=\beta+1\\
\bigcup_{\beta<\alpha}B_{<\beta} & \mbox{if } \alpha \text{ is limit ordinal}. 
\end{cases} \]

 First we show  by transfinite induction
 that $ B_{<\alpha} $ is spanning in $ M\upharpoonright W_{<\alpha} $. For $ \alpha=0 $ it is trivial. 
 For a limit $ \alpha $ it follows directly from the induction hypothesis. If $ \alpha=\beta+1 $, then
 by the choice of $ B_\beta $, the set $ B_\beta \setminus W_{<\beta} $ spans $ W_{<\beta+1}\setminus W_{<\beta} $ 
 in $ M/W_{<\beta} $. Since $ W_{<\beta} $ is spanned by $ B_{<\beta} $ in $ M $ by induction, it follows that 
 $ W_{<\beta+1} $ is spanned by $ B_{<\beta+1} $ in $ M $.
 
 The  independence of $ B_{<\alpha} $ in
 $ N.W_{<\alpha} $   can be reformulated as ``$W_{<\alpha}\setminus B_{<\alpha}$ is spanning in $ 
 N^{*} \upharpoonright 
 W_{<\alpha} $'', 
 which can be proved the same way as above. 
\end{proof}
By Proposition \ref{wave union} there exists a $ \subseteq $-largest $ (M,N) $-wave  that we denote by 
$ \boldsymbol{W(M,N)} $. Note that if $W(M,N) $ is not  witnessing  the violation of $ \mathsf{cond}(M,N) $ (see the 
definition right after Theorem \ref{thm matr int}) then there is 
no such a witness, i.e., $ \mathsf{cond}(M,N) $ holds.

\begin{obs}
If $ W_0 $ is an $ (M,N) $-wave and $ W_1 $ is an $ (M/W_0, N-W_0) $-wave, then $ W_0\cup W_1 $ is an $ (M,N) 
$-wave.
\end{obs}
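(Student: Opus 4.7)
The plan is to pick witnesses for the two waves and verify that their union witnesses the combined wave. Fix $B_0\subseteq W_0$ a base of $M\upharpoonright W_0$ independent in $N.W_0$, and $B_1\subseteq W_1$ a base of $(M/W_0)\upharpoonright W_1$ independent in $(N-W_0).W_1$. Set $W:=W_0\cup W_1$ and $B:=B_0\cup B_1$; the claim is that $B$ is a base of $M\upharpoonright W$ which is independent in $N.W$, so that $W$ is an $(M,N)$-wave.

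For the $M$-side, independence of $B$ in $M$ is immediate, because $B_0$ is $M$-independent and $B_1$ is independent in $M/W_0$. For spanning, $B_0$ spans $W_0$ in $M$ and, since $B_1$ spans $W_1$ in $M/W_0$, each $e\in W_1$ lies in $\mathsf{span}_M(B_1\cup W_0)$; combined with $W_0\subseteq \mathsf{span}_M(B_0)\subseteq \mathsf{span}_M(B)$, transitivity of the spanning operator gives $W\subseteq \mathsf{span}_M(B)$.

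For the $N$-side I would exploit the dual reformulation used in the proof of Proposition \ref{wave union}: a set $X\subseteq Y$ is independent in $N.Y$ if and only if $Y\setminus X$ spans $Y$ in $N^*\upharpoonright Y$. Translating both hypotheses through duality yields (i) $W_0\setminus B_0$ spans $W_0$ in $N^*\upharpoonright W_0$, and (ii) $W_1\setminus B_1$ spans $W_1$ in $(N-W_0)^*\upharpoonright W_1$. Using $(N-W_0)^*=N^*/W_0$ together with the standard identity $(N^*/W_0)\upharpoonright W_1=(N^*\upharpoonright W)/W_0$, both statements can be read inside the single matroid $\widetilde N:=N^*\upharpoonright W$: (i) says $W_0\setminus B_0$ spans $W_0$ in $\widetilde N$, while (ii) says $W_1\setminus B_1$ spans $W_1$ in $\widetilde N/W_0$, equivalently $(W_1\setminus B_1)\cup W_0$ spans $W_1$ in $\widetilde N$. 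Using (i) and transitivity of $\mathsf{span}_{\widetilde N}$ to replace $W_0$ by $W_0\setminus B_0$ in the latter, we conclude that $W\setminus B=(W_0\setminus B_0)\cup(W_1\setminus B_1)$ spans both $W_0$ and $W_1$ in $\widetilde N$, hence spans $W$; this is exactly $B$ being independent in $N.W$.

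The main obstacle is purely bookkeeping with minor operations, specifically verifying that after dualizing both hypotheses live inside the same ambient matroid $N^*\upharpoonright W$. Once that is done, the verification collapses to transitivity of the spanning operator; no new combinatorial content beyond the argument already present in the proof of Proposition \ref{wave union} is needed.
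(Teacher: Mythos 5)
Your proof is correct and matches the paper's approach: the paper states this observation without proof, as it is precisely the two-set instance of the successor step in the proof of Proposition \ref{wave union}, and your argument (uniting the witnesses $B_0\cup B_1$, checking the $M$-side by transitivity of span and the $N$-side via the dual reformulation in $N^{*}\upharpoonright W$) reproduces exactly that step.
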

\begin{cor}\label{empty wave}
For $ W:=W(M,N) $, the largest $ (M/W, N-W) $-wave  is $ \varnothing $.
\end{cor}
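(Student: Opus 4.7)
The plan is to derive the corollary as an immediate consequence of the Observation that directly precedes it, combined with the existence of a $\subseteq$-largest wave guaranteed by Proposition \ref{wave union}.

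First I would set $W := W(M,N)$, the unique $\subseteq$-largest $(M,N)$-wave, and let $W_1$ be an arbitrary $(M/W, N-W)$-wave. The ground set of both $M/W$ and $N-W$ is $E \setminus W$, so automatically $W_1 \subseteq E\setminus W$ and in particular $W \cap W_1 = \varnothing$.

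Next I would apply the preceding Observation with $W_0 = W$: it yields that $W \cup W_1$ is an $(M,N)$-wave. By maximality of $W = W(M,N)$ among $(M,N)$-waves, this forces $W \cup W_1 \subseteq W$, i.e., $W_1 \subseteq W$. Combined with the disjointness $W_1 \cap W = \varnothing$, this gives $W_1 = \varnothing$. Hence every $(M/W, N-W)$-wave is empty, so the $\subseteq$-largest such wave is $\varnothing$ as claimed.

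There is no real obstacle here; the only thing to be mindful of is that the Observation is being quoted as given (its own short proof proceeds by taking a base of $M\upharpoonright W_0$ independent in $N.W_0$ and extending it by a base of $(M/W_0)\upharpoonright W_1$ independent in $(N-W_0).W_1$, and then checking that this union is a base of $M\upharpoonright(W_0\cup W_1)$ independent in $N.(W_0\cup W_1)$). Since the statement of the corollary only asks for emptiness of the largest wave of the contracted/deleted pair, the two-line argument above suffices.
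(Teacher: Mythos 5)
Your argument is correct and is exactly the intended one: the paper states this as an immediate corollary of the preceding Observation (applied with $W_0=W$) together with the maximality of $W(M,N)$ from Proposition \ref{wave union}, and leaves the two-line deduction implicit. Nothing further is needed.
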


\begin{obs}\label{loop span}
If $ \mathsf{cond}(M, N) $ holds and $ L $ consists of $ M $-loops, then $L\subseteq \mathsf{span}_N(E\setminus L) $  
since otherwise wave $ L $ would violate $ \mathsf{cond}(M, N) $).
\end{obs}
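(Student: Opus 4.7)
The plan is to apply $\mathsf{cond}(M,N)$ directly to the set $W := L$. Since every element of $L$ is an $M$-loop, the empty set is a base of $M \upharpoonright L$, and it is trivially independent in $N.L$; hence the hypothesis of $\mathsf{cond}(M,N)$ is satisfied at $W = L$. This is essentially the same as observing that $L$ itself is a trivial wave witnessed by $\varnothing$.

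Invoking $\mathsf{cond}(M,N)$ then produces a set $B \subseteq L$ that is a base of $N.L = N/(E \setminus L)$ and is independent in $M \upharpoonright L$. But any $M$-independent subset of $L$ must be empty, since $L$ consists of $M$-loops, so necessarily $B = \varnothing$. In particular $\varnothing$ is a base of $N/(E \setminus L)$.

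To finish, I would unwind the previous sentence as a spanning statement in $N$: choosing any base $B_{0}$ of $N \upharpoonright (E \setminus L)$, the union $B_{0} \cup B = B_{0}$ is then a base of $N$, so $E \setminus L$ is $N$-spanning in $E$. This gives exactly $L \subseteq \mathsf{span}_{N}(E \setminus L)$, as required.

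There is no real obstacle here; the observation is a one-line specialisation of $\mathsf{cond}(M,N)$ to the trivial wave $L$, precisely mirroring the parenthetical hint in the statement.
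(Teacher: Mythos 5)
Your proof is correct and is exactly the argument the paper intends: the parenthetical remark in the Observation is precisely your specialisation of $\mathsf{cond}(M,N)$ to the trivial wave $L$, forcing $\varnothing$ to be a base of $N.L$ and hence $E\setminus L$ to be $N$-spanning. No issues.
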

\begin{cor}\label{contr base}
Assume that $ \mathsf{cond}(M, N) $ holds, $ X\subseteq E $ and  $ L\subseteq X $ consists of $ M $-loops. Then  any base $ B 
$ of 
$ (N.X)- L  $ is a base of $ N.X $.
\end{cor}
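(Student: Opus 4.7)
The plan is to verify independence and spanning of $B$ in $N.X$ separately. Independence is free: $(N.X)-L$ is a restriction of $N.X$, hence every set independent in the former is independent in the latter. So the whole task reduces to showing $\mathsf{span}_{N.X}(B)=X$, i.e.\ that $B$ spans both $X\setminus L$ and $L$ in $N.X$.

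For $X\setminus L$ I would argue directly: since $B$ is a base of $(N.X)\upharpoonright(X\setminus L)=(N.X)-L$ and independence in this restriction agrees with independence in $N.X$ for subsets of $X\setminus L$, maximality of $B$ yields that every $e\in (X\setminus L)\setminus B$ creates a circuit of $N.X$ inside $B+e$. Thus $X\setminus L\subseteq \mathsf{span}_{N.X}(B)$.

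The substantive step is $L\subseteq \mathsf{span}_{N.X}(B)$, and this is where both $\mathsf{cond}(M,N)$ and the fact that $L$ consists of $M$-loops must be used. I would pick a base $Y$ of $N\upharpoonright(E\setminus X)$ (which exists by axiom \ref{item axiom4}) and work with the standard identity $\mathsf{span}_{N.X}(S)=\mathsf{span}_N(S\cup Y)\cap X$ for $S\subseteq X$. Applied to $B$, the previous paragraph becomes $X\setminus L\subseteq \mathsf{span}_N(B\cup Y)$; and $Y$ being a base of $N\upharpoonright(E\setminus X)$ gives $E\setminus X\subseteq \mathsf{span}_N(B\cup Y)$. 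Combining, $E\setminus L\subseteq \mathsf{span}_N(B\cup Y)$. Now Observation \ref{loop span} (which is exactly where $\mathsf{cond}(M,N)$ and the $M$-looped nature of $L$ are used) supplies $L\subseteq \mathsf{span}_N(E\setminus L)$, and monotonicity plus idempotence of $\mathsf{span}_N$ then gives $L\subseteq \mathsf{span}_N(B\cup Y)$. Intersecting with $X$ (and using $L\subseteq X$) yields $L\subseteq \mathsf{span}_{N.X}(B)$, finishing the proof.

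The main point of care is the identity $\mathsf{span}_{N.X}(S)=\mathsf{span}_N(S\cup Y)\cap X$ in the infinite setting, i.e.\ that a base $Y$ of $N\upharpoonright(E\setminus X)$ really converts spans in the contraction $N/(E\setminus X)$ back into spans in $N$. Once this standard matroid identity is in place, the rest is only an assembly of Observation \ref{loop span} with monotonicity of $\mathsf{span}_N$, and no further combinatorial work is needed.
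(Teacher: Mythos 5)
Your proof is correct and follows essentially the same route as the paper: pick a base $Y$ of $N\upharpoonright(E\setminus X)$, observe that $B\cup Y$ spans $E\setminus L$ in $N$, and invoke Observation \ref{loop span} to pull in $L$ as well. The paper's version is just a one-line compression of your argument (with the independence of $B$ in $N.X$ and the contraction/span identity left implicit).
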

\begin{proof}
For a base $ B' $ of $ N- X $, the set 
$ B\cup B'$ spans $ E\setminus L $ and hence by Observation \ref{loop span} spans the whole $ E $ as well.
\end{proof}

Let us write $ \boldsymbol{\mathsf{cond}^{+}(M, N)} $ for the condition that $ \mathsf{cond}(M, N) $ holds and
 $ W(M,N) $ is trivial (i.e., consists of $ M $-loops).
\begin{lem}\label{one more edge}
Condition $ \mathsf{cond}^{+}(M, N) $ implies that 
 whenever $ W $ is an $ (M/e, N/e) $-wave for some $ e\in E $ witnessed by $ B\subseteq W $, then $ B $ is a common base 
 of $ (M/e) \upharpoonright W $ and $(N/e).W$.
\end{lem}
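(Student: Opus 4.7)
The plan is to assume for contradiction that $B$ is not a base of $(N/e).W$ and thereby produce a non-trivial $(M,N)$-wave, contradicting the triviality of $W(M,N)$ via Proposition \ref{wave union}. A key simplification is the identity $(N/e).W = N.W$, which follows from commutativity of contraction since $e\notin W$: one has $(N/e).W = (N/e)/((E\setminus\{e\})\setminus W) = N/(E\setminus W) = N.W$. Under the contradictory assumption there exists $f\in W\setminus B$ with $B+f$ independent in $N.W$.

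First I would dispose of the case where $e$ is an $M$-loop: then $(M/e)\upharpoonright W = M\upharpoonright W$, so $W$ itself is an $(M,N)$-wave witnessed by $B$. Hence $W\subseteq W(M,N)$, which is trivial by $\mathsf{cond}^{+}(M,N)$, so $W$ consists of $M$-loops and $B=\varnothing$. Corollary \ref{contr base} with $X=L=W$ then gives that $\varnothing$ is a base of $N.W$, contradicting the existence of $f$.

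In the main case, $e$ is not an $M$-loop, so $B+e$ is a base of $M\upharpoonright(W+e)$, and I aim to show $W+e$ is an $(M,N)$-wave; since $e$ is not an $M$-loop this contradicts $\mathsf{cond}^{+}(M,N)$. A direct rank computation shows that $B+e$ is independent in $N.(W+e)$ precisely when $e\notin\mathsf{span}_{N}(E\setminus W\setminus\{e\})$, which settles one subcase. Otherwise $e\in\mathsf{span}_{N}(E\setminus W\setminus\{e\})$, and an analogous rank argument applied to $B+f$ yields that $B+f$ is independent in $N.(W+e)$; moreover when $f\notin\mathsf{span}_{M}(B)$, the fundamental circuit $C_M(f,B+e)$ must contain $e$ (otherwise $C_M(f,B+e)\subseteq B+f$ would force $f\in\mathsf{span}_{M}(B)$), so $B+f$ is $M$-independent and $M$-spans $e$, making it a base of $M\upharpoonright(W+e)$ witnessing the desired wave.

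The delicate residual case is when every $f\in W\setminus B$ with $B+f$ independent in $N.W$ satisfies $f\in\mathsf{span}_{M}(B)$. Here I would apply the Edmonds-style augmenting-path analysis of Section \ref{sec augpath} to the matroid pair $((M/e)\upharpoonright W,\,N.W)$ on ground set $W$ with common independent set $B$: since $B$ is already a base of $(M/e)\upharpoonright W$, no augmenting path can exist, and the resulting bipartition $W=E_{M}^{(W)}\cup E_{N}^{(W)}$ identifies a smaller $(M/e,N/e)$-wave $E_{M}^{(W)}$ whose witness $B\cap E_{M}^{(W)}$ still fails to be a base on the $N$-side (the non-spanned $f$ survives in $E_{M}^{(W)}$). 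An iterative reduction to the easy subcases handled above, or an argument showing directly that any such $f$ must be an $M$-loop so that Corollary \ref{contr base} applies with $L$ the $M$-loops of $W$, should conclude the proof. This last step is where I anticipate the main technical work.
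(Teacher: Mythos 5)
Your reductions are fine as far as they go: the identity $(N/e).W=N.W$, the disposal of the loop case, and the two subcases in which you exhibit $W+e$ as a non-trivial $(M,N)$-wave (witnessed by $B+e$ when $e\notin\mathsf{span}_N(E\setminus W-e)$, and by $B+f$ when some non-$N$-spanned $f$ lies outside $\mathsf{span}_M(B)$) are all correct. But the ``delicate residual case'' you flag is not a corner case to be cleaned up later --- it is the entire content of the lemma, and neither of your two suggestions for it works. Running the augmenting-path method for the pair $\bigl((M/e)\upharpoonright W,\,N.W\bigr)$ is circular: since $B$ is already a base of $(M/e)\upharpoonright W$ there is never an augmenting path, and the Edmonds bipartition only hands you back another $(M/e,N/e)$-wave $E_M^{(W)}$ whose witness still fails to span on the $N$-side --- exactly the object you started from, with no measure of progress and no reason the iteration terminates or shrinks anything. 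The alternative hope that every such $f$ is an $M$-loop is unfounded: $f\in\mathsf{span}_M(B)$ with $B\neq\varnothing$ does not make $f$ a loop, and Corollary \ref{contr base} does not apply.

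The missing idea is to run the augmenting-path method for the \emph{uncontracted} pair $(M\upharpoonright W,\,N.W)$ starting from $B$. Having reduced to the case that $e$ is not an $M$-loop and $e\in\mathsf{span}_M(W)$, one has $r((M\upharpoonright W)/B)=1$, so $B$ is \emph{not} a base on the $M$-side and an augmentation is meaningful: if an augmenting path $P$ exists, then by Corollary \ref{aug what span} the set $B\vartriangle P$ is a base of $M\upharpoonright W$ independent in $N.W$, so $W$ itself (not $W+e$) is a non-trivial $(M,N)$-wave, contradicting $\mathsf{cond}^+(M,N)$; if none exists, the bipartition $W=W_0\cup W_1$ makes $W_0$ an $(M,N)$-wave, hence trivial, so $B\subseteq W_1$ spans $W_1$ in $N.W$ and Corollary \ref{contr base} (with $X:=W$, $L:=W_0$) shows $B$ is a base of $N.W$ after all. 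Your choice to contradict via $W+e$ cannot see this dichotomy, because in the residual case the contradiction (when there is one) is produced by modifying $B$ along a path inside $W$, not by adjoining $e$.
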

\begin{proof}
Let $ W $ be an $ (M/e, N/e)  $-wave. Note that 
$(N/e).W=N.W  $ by definition. Pick an $ B\subseteq W $ which is an $ N.W $-independent base of $ (M/e)\upharpoonright 
W $.  We may assume 
that 
$ e\in \mathsf{span}_M(W) $ and $ e $ is not an $ M $-loop. Indeed,  otherwise $  (M/e) \upharpoonright 
W=M\upharpoonright 
W$ holds 
and hence by $ \mathsf{cond}^{+}(M, N) $ we may conclude that $ W $ is trivial and hence $ B=\varnothing $, which is a 
desired 
common base.  

Then $ B $ is not a base in
$ M\upharpoonright W $  but ``almost'', namely $ r((M\upharpoonright W)/B)=1 $.  We apply the 
augmenting path method with $ B$ in regards to  $M\upharpoonright W$ and $  N.W  $. The augmentation 
cannot be successful. Indeed, if $ P $ were an augmenting path then $ B\vartriangle P $ would show that $ W $ is a 
non-trivial $ (M,N) $-wave. Thus we get a bipartition 
$ W=W_{0}\cup W_{1} $ instead where $ B\cap W_0 $ spans $ W_0 $ in $ M $ and $ B\cap W_1 $ spans $ W_1 $ in $ N.W $. 
Observe that $ W_0 $ is an $ (M,N) $-wave and hence it must be trivial and therefore $ B\subseteq W_1 $. By applying 
Corollary \ref{contr base} with $ X:=W$ and  $L:=W_0 $, we may conclude that $ B $ is a base of $ N.W $ (and of $ 
(M/e)\upharpoonright W $ by definition).
 \end{proof}

\section{Reductions}\label{sec reductions}
The first reduction (Corollary \ref{MI ekv ind-span}) will connect Theorems \ref{thm matr int} and \ref{thm ind span}  even 
in a more general form. This was already discovered by Aharoni and Ziv in  \cite{MR1678148}.
\begin{prop}
Let $ M $ and $ N $ be matroids on the common edge set $ E $ such that $ \{ M, N \} $ has the Intersection property. Then
$ \mathsf{cond}(M,N) $ is equivalent with the existence of an $ M $-independent base of $ N $.
\end{prop}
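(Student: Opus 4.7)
The equivalence splits into two directions; only the forward direction $(\Rightarrow)$ really uses the Intersection property.

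For $(\Leftarrow)$, suppose $B$ is an $M$-independent $N$-base and fix $W\subseteq E$. The plan is to exhibit an $N.W$-base inside $B\cap W$. Since $(B\cap W)\cup(E\setminus W)\supseteq B$ and $B$ is $N$-spanning, the standard behaviour of spanning under contraction yields $\mathsf{span}_{N.W}(B\cap W)=W$, so $B\cap W$ is $N.W$-spanning. Applying axiom \ref{item axiom4} to $N.W$ with ground subset $B\cap W$, I obtain a $\subseteq$-maximal $N.W$-independent set $A\subseteq B\cap W$. Maximality forces every element of $B\cap W$ into $\mathsf{span}_{N.W}(A)$, hence $\mathsf{span}_{N.W}(A)\supseteq\mathsf{span}_{N.W}(B\cap W)=W$, making $A$ an $N.W$-base. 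Since $A\subseteq B$, the set $A$ is $M$-independent, hence $M\upharpoonright W$-independent, and witnesses the conclusion of $\mathsf{cond}(M,N)$ at $W$. (The premise of $\mathsf{cond}(M,N)$ is not even used here.)

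For $(\Rightarrow)$, let $(I,E_M,E_N)$ be an Intersection witness for $\{M,N\}$. First I would verify that the premise of $\mathsf{cond}(M,N)$ holds at $W:=E_M$: the set $I\cap E_M$ is an $M$-independent spanning subset of $E_M$ in $M$, hence a base of $M\upharpoonright E_M$, and it is $N.E_M$-independent because $I\cap E_N$ is a base of $N\upharpoonright E_N$ and $I=(I\cap E_M)\cup(I\cap E_N)$ is $N$-independent. Applying $\mathsf{cond}(M,N)$ yields a base $J$ of $N.E_M$ that is $M\upharpoonright E_M$-independent. Set $B:=J\cup(I\cap E_N)$. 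That $B$ is an $N$-base follows from the standard composition: a base of $N.E_M=N/E_N$ together with a base of $N\upharpoonright E_N$ is a base of $N$.

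The delicate step, and the only place where the Intersection property is essential, is to show that $B$ is $M$-independent. Suppose towards contradiction that $C\subseteq B$ is an $M$-circuit. As $J$ and $I\cap E_N$ are each $M$-independent, $C$ meets both, so pick $e\in C\cap(I\cap E_N)$. Then $e\in \mathsf{span}_M(C-e)\subseteq\mathsf{span}_M(B-e)=\mathsf{span}_M(J\cup((I\cap E_N)-e))$. Using the key inclusion $J\subseteq E_M\subseteq\mathsf{span}_M(I\cap E_M)$, supplied by the Intersection property, this is contained in $\mathsf{span}_M((I\cap E_M)\cup((I\cap E_N)-e))=\mathsf{span}_M(I-e)$, placing $e$ in $\mathsf{span}_M(I-e)$ and contradicting the $M$-independence of $I$. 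The main obstacle is precisely this closure computation, which converts the combinatorial hypothesis ``$I\cap E_M$ $M$-spans $E_M$'' into a circuit-freeness guarantee for the composite set $B$.
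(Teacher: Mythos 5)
Your proof is correct and follows essentially the same route as the paper: the forward direction uses the same decomposition (apply $\mathsf{cond}(M,N)$ to the wave $E_M$ to get $J$, then set $B=J\cup(I\cap E_N)$), differing only in that you verify the $M$-independence of $B$ by an explicit closure computation where the paper invokes the fact that $I\cap E_N$ is independent in $M.E_N$. Your backward direction is a more detailed version of what the paper dismisses as ``clearly necessary.''
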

\begin{proof}
The condition $ \mathsf{cond}(M,N) $ is clearly necessary even without any further assumption. To show its sufficiency, let 
$ I=I_M\cup I_N $ and $ 
E=E_M\cup E_N  $ as in Conjecture \ref{MIC}. Then $ I_M $ is an
 $ N.E_M $-independent base of $ M\upharpoonright E_M $ and $ I_N $ is an
  $ M.E_N $-independent base of $ N\upharpoonright E_N $. Therefore $ E_M $ is a wave and by $ \mathsf{cond}(M,N) $ 
  we 
 can pick a $ J $ 
which is a base of $ N.E_M $ and independent in $ M $. Then $B:= I_N\cup J $ is a base of $ N $ and it is also independent 
in $ M $ because $ I_N $ is independent  in $ M.E_N $.
\end{proof}
\begin{obs}\label{closed class}
The matroid classes: finitary, cofinitary, nearly finitary, nearly cofinitary are closed under taking minors. Furthermore, if  
$ \kappa $ is a cardinal and  a class $ \mathcal{C} $ of matroids is closed under taking minors, then so is the subclass 
$ \{ M\in \mathcal{C}: \left|E(M)\right|<\kappa \} $.
\end{obs}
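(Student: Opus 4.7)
I would handle the observation in three pieces. The cardinality restriction is immediate: any minor of $M$ has ground set of size at most $|E(M)|$, so closure under minors descends to the subclass $\{M \in \mathcal{C} : |E(M)| < \kappa\}$. Finitariness of a minor $M/X-Y$ is direct: every circuit has the form $C \setminus X$ where $C$ is an $M$-circuit disjoint from $Y$, and is therefore finite whenever $M$ is finitary. Cofinitariness then follows from the dual identities $(M/X)^{*} = M^{*}-X$ and $(M-Y)^{*} = M^{*}/Y$, which exhibit the dual of a minor of $M$ as a minor of $M^{*}$. Once nearly finitary is treated, nearly cofinitary drops out by the same duality.

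For nearly finitary under contraction, the difficulty is that the finitarization of $M$ does \emph{not} commute with contraction, because a finite circuit of $M/X$ can arise as $C \setminus X$ from an infinite $M$-circuit $C$ whose intersection with $E\setminus X$ happens to be finite; in general, $(M/X)^{\mathrm{fin}}$ is strictly coarser than $M^{\mathrm{fin}}/X$. I would fix an $M/X$-base $B'$ together with a maximal $M$-independent subset $B_{X} \subseteq X$, so that $B' \cup B_{X}$ is an $M$-base. Near-finitariness of $M$ yields a finite $F$ with $B' \cup B_{X} \cup F$ an $M^{\mathrm{fin}}$-base. The key step is the implication: if $J \subseteq E \setminus X \setminus B'$ is independent in $(M/X)^{\mathrm{fin}}/B'$, then every finite $K \subseteq J \cup B'$ is $M/X$-independent, so $K \cup B_{X}$ is $M$-independent; as every finite subset of $J \cup B' \cup B_{X}$ sits inside some such $K \cup B_{X}$, the whole set $J \cup B' \cup B_{X}$ is $M^{\mathrm{fin}}$-independent. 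Thus $J$ is independent in $M^{\mathrm{fin}}/(B' \cup B_{X})$, a matroid of rank $|F|$, giving $|J| \leq |F|$ and hence $M/X$ nearly finitary.

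For deletion, $(M-Y)^{\mathrm{fin}}$ does equal $M^{\mathrm{fin}} - Y$ (both have exactly the finite $M$-circuits inside $E\setminus Y$ as their circuits), but the rank comparison is more delicate. Given an $M-Y$-base $B_{Y}$, one extends to an $M$-base $\tilde B = B_{Y} \cup S$ with $S \subseteq Y$ and to an $M^{\mathrm{fin}}$-base $\tilde B \cup F$ with $F$ finite; the hoped-for bound $|B'_{Y} \setminus B_{Y}| \leq |F|$ for any $M^{\mathrm{fin}}-Y$-base $B'_{Y} \supseteq B_{Y}$ would follow at once if $B'_{Y} \cup S$ were always $M^{\mathrm{fin}}$-independent. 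I expect this is the \textbf{main obstacle}, because a finite $M$-circuit can link a new element $j \in B'_{Y} \setminus B_{Y}$ to an element of $S$ through $B_{Y}$ even when both $B'_{Y}$ and $\tilde B$ are independent. My plan is to sidestep this by either making a compatible choice of $B'_{Y}$ (extending $\tilde B$ to a maximal $M^{\mathrm{fin}}$-independent subset of $(E\setminus Y) \cup S$ and then discarding $S$) or by an iterated strong-circuit-elimination argument driven by the finite set $F$, which trades each ``crossing'' circuit for one contained in $B_{Y} \cup J \cup (F-f)$ for some $f\in F$ and terminates in at most $|F|$ steps. Nearly cofinitary then follows from the nearly finitary case by duality, as in the cofinitary step.
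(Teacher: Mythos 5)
The paper offers no proof of this Observation at all: it is stated as folklore, the only non-routine clause (minor-closedness of the nearly finitary class) being a result of Aigner-Horev, Carmesin and Fr\"ohlich in the cited paper \cite{MR3784779}. Your treatment of the cardinality clause, of finitary/cofinitary, and of the duality reductions is correct, and your argument for \emph{nearly finitary under contraction} is complete and clean: the passage from ``$J\cup B'$ is $(M/X)^{\mathrm{fin}}$-independent'' to ``$J\cup B'\cup B_X$ is $M^{\mathrm{fin}}$-independent'' via finite subsets is exactly right, and it correctly yields $|J|\le|F|$.

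The gap is the deletion case for nearly finitary matroids, which you flag but do not close, and neither of your proposed repairs works as described. The ``compatible choice'' fails because after extending $B_Y\cup S$ to a maximal $M^{\mathrm{fin}}$-independent subset $B_Y\cup S\cup J$ of $(E\setminus Y)\cup S$ and discarding $S$, the set $B_Y\cup J$ need not be maximal $M^{\mathrm{fin}}$-independent in $E\setminus Y$: an edge $e\in E\setminus Y$ may be spanned by $B_Y\cup S\cup J$ only through a finite circuit meeting $S$, so $B_Y\cup J+e$ can still be $M^{\mathrm{fin}}$-independent, and nothing yet bounds how many such $e$ must be added. The circuit-elimination sketch runs into the same wall: a maximal $J'\subseteq J$ with $B_Y\cup S\cup J'$ $M^{\mathrm{fin}}$-independent indeed has $|J'|\le|F|$, but the remaining elements of $J$ lie in $\mathsf{span}_{M^{\mathrm{fin}}/(B_Y\cup J')}(S)$, and since $S$ may be infinite this gives no bound on $|J\setminus J'|$. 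A working route (essentially that of \cite{MR3784779}) first upgrades the base formulation of near-finitariness to the statement that every $M^{\mathrm{fin}}$-independent set contains an $M$-independent subset with difference at most some fixed $k$, applies this to a base $B_Y^{+}\supseteq B_Y$ of $M^{\mathrm{fin}}-Y$ to produce an $M$-independent $I'\subseteq B_Y^{+}\subseteq E\setminus Y$ with $|B_Y^{+}\setminus I'|\le k$, extends $I'$ to a base $B_Y''$ of $M-Y$, and then uses the equicardinality $|B_Y\setminus B_Y''|=|B_Y''\setminus B_Y|$ of base differences to conclude $|B_Y^{+}\setminus B_Y|\le 2k$; both of these auxiliary facts require proof, so the deletion case is genuinely not ``direct'' and should either be proved along these lines or cited.
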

\begin{prop}\label{middle step}
For $ i\in \{ 0,1 \} $ let $ \mathcal{C}_i $ is a class of matroids where $ \mathcal{C}_0 $ is closed under contraction and $ 
\mathcal{C}_1 $ is closed under deletion. Assume that for every 
$ (M,N)\in  \mathcal{C}_0 \times  \mathcal{C}_1 $ with $ E(M)=E(N) $, $ \mathsf{cond}(M,N) $ implies the existence of a 
base 
of $ N $ which is independent in $ M $.
Then every  $ (M,N)\in \mathcal{C}_0 \times  \mathcal{C}_1  $ with $ E(M)=E(N) $ the pair $ \{ M,N \} $ has the Intersection 
property.
\end{prop}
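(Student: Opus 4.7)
The plan is to cut $E$ along the largest $(M,N)$-wave and handle the two sides separately. Set $W := W(M,N)$. By the definition of a wave there is a base $B \subseteq W$ of $M\upharpoonright W$ which is independent in $N.W$; this $B$ will serve as $I \cap E_M$ with $E_M := W$. The task is to supply the complementary piece, namely a base $B' \subseteq E\setminus W$ of $N-W$ that is independent in $M/W$, which would then serve as $I \cap E_N$ with $E_N := E\setminus W$.

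To obtain $B'$, I apply the hypothesis of the proposition to the pair $(M/W, N-W)$. Closure of $\mathcal{C}_0$ under contraction and of $\mathcal{C}_1$ under deletion gives $(M/W, N-W) \in \mathcal{C}_0 \times \mathcal{C}_1$, so the only thing to check is that $\mathsf{cond}(M/W, N-W)$ holds. This is the one nontrivial point, and it is packaged entirely into Corollary \ref{empty wave}: the largest $(M/W, N-W)$-wave is $\varnothing$, which is trivially not a witness for the failure of $\mathsf{cond}$, so by the remark following Proposition \ref{wave union}, $\mathsf{cond}(M/W, N-W)$ holds.

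With $B'$ produced, put $I := B \cup B'$. Independence in $M$ is immediate because $B$ is $M$-independent and $B'$ is independent in $M/W$. Independence in $N$ comes from the very definition of $N.W = N/(E\setminus W)$: $B$ being independent in $N.W$ means $B \cup S$ is $N$-independent for any base $S$ of $N-W$, and $B'$ is such a base. The spanning conditions $\mathsf{span}_M(I \cap E_M) \supseteq E_M$ and $\mathsf{span}_N(I \cap E_N) \supseteq E_N$ are built into the choices of $B$ and $B'$. Hence $\{M,N\}$ has the Intersection property. There is no genuine obstacle here; the proposition is a clean assembly of the wave calculus with the assumed implication, the only substantive input being that an empty largest wave forces $\mathsf{cond}$ to hold on the complementary minor.
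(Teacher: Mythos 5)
Your argument is correct and coincides with the paper's own proof: both split $E$ along the largest wave $W(M,N)$, invoke Corollary \ref{empty wave} to get $\mathsf{cond}(M/W,N-W)$, apply the hypothesis to the minor pair (using the closure assumptions), and assemble the two pieces into the desired common independent set. No issues.
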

\begin{proof}
Let $ E_M:= W(M,N) $ and let $ I_M $ be a base of $ M\upharpoonright E_M $ which is independent in $ N.E_M $, i.e., $ I_M 
$ is  witnessing that $ 
E_M $ is  a wave. Then $ W(M/E_M, N-E_M)=\varnothing $ by Corollary \ref{empty wave}, in particular 
$ \mathsf{cond}(M/E_M, N-E_M) $ holds. 
Since $ \mathcal{C}_i $ are closed 
under taking minors, we have $ (M/E_M, N-E_M)\in \mathcal{C}_0 \times  \mathcal{C}_1 $ and therefore by assumption we 
can find a base $ I_N $ of 
$ N-E_M $ which is 
independent in $ M/E_M $.
\end{proof}
\begin{cor}\label{MI ekv ind-span}
For $ i\in \{ 0,1 \} $ let $ \mathcal{C}_i $ is a class of matroids where $ \mathcal{C}_0 $ is closed under contraction and $ 
\mathcal{C}_1 $ is closed under deletion. The following are equivalent:
\begin{enumerate}
\item For every $(M,N)\in  \mathcal{C}_0\times \mathcal{C}_1 $ with $ E(M)=E(N) $, $ \{ M,N \} $  has the Intersection 
property.
\item For every $(M,N)\in  \mathcal{C}_0\times \mathcal{C}_1 $ with $ E(M)=E(N) $
 satisfying $ \mathsf{cond}(M,N) $,  there is a base of $ N $ which is independent in $ M $.
\end{enumerate}
\end{cor}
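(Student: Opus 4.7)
My plan is to observe that both directions of the equivalence essentially follow by combining two results already established in this section.

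For the direction \emph{(1)$\Rightarrow$(2)}, I would argue as follows. Fix any $(M,N) \in \mathcal{C}_0 \times \mathcal{C}_1$ with $E(M)=E(N)$ satisfying $\mathsf{cond}(M,N)$. By hypothesis~(1), the pair $\{M,N\}$ has the Intersection property. Then the first proposition of this section (the one preceding Observation~\ref{closed class}) applies: under the Intersection property for $\{M,N\}$, the condition $\mathsf{cond}(M,N)$ is equivalent to the existence of an $M$-independent base of $N$. Since $\mathsf{cond}(M,N)$ is assumed to hold, such a base exists, which is precisely~(2).

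For the direction \emph{(2)$\Rightarrow$(1)}, I would simply invoke Proposition~\ref{middle step}. The closure assumptions on $\mathcal{C}_0$ and $\mathcal{C}_1$ are exactly the hypotheses of that proposition, and the statement ``$\mathsf{cond}(M,N)$ implies the existence of an $N$-base independent in $M$'' is precisely~(2). Hence Proposition~\ref{middle step} yields the Intersection property for every $(M,N) \in \mathcal{C}_0 \times \mathcal{C}_1$, which is~(1).

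There is no real obstacle here, as both ingredients are already in place: the first proposition handles one direction by reasoning within a fixed pair that already has the Intersection property, while Proposition~\ref{middle step} performs the wave-based decomposition $E = W(M,N) \cup (E \setminus W(M,N))$ and uses the minor-closure of $\mathcal{C}_0$ and $\mathcal{C}_1$ together with Corollary~\ref{empty wave} to reduce to the hypothesis of~(2) on the contraction/deletion pair $(M/W(M,N), N - W(M,N))$. The only point to double-check when writing this out cleanly is that the minor-closure hypotheses of Proposition~\ref{middle step} are used in exactly the same asymmetric way (contraction on the first coordinate, deletion on the second), which matches the statement here verbatim.
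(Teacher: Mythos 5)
Your proposal is correct and is exactly the argument the paper intends: direction (1)$\Rightarrow$(2) is the unnamed proposition preceding Observation \ref{closed class}, and direction (2)$\Rightarrow$(1) is Proposition \ref{middle step} applied verbatim. Nothing further is needed.
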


Our next goal is to show that the Matroid Intersection Conjecture 
\ref{MIC} for nearly finitary and nearly cofinitary matroids  can be 
reduced to the case of
finitary and cofinitary ones even if 
the matroids are not countable, more precisely: 
\begin{prop}\label{nearly}
For $ i\in\{ 0,1 \} $, let $ M_i $ be a nearly finitary  or nearly cofinitary  matroid on $ E $  and let $ M_i' $ be its 
finitarization or cofinitarization respectively. 
If  $ \{ M_0', M_1' \} $ has the Intersection property then so does $ \{ M_0, M_1 \} $.
\end{prop}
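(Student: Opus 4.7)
The plan is to start from an Intersection witness for $\{M_0',M_1'\}$ and modify it by \emph{finitely many} edges to produce a witness for $\{M_0,M_1\}$, exploiting the finite ``gap'' between nearly finitary (resp.\ nearly cofinitary) matroids and their finitarizations (resp.\ cofinitarizations).

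First observe that if exactly one of $M_0,M_1$ is nearly finitary while the other is nearly cofinitary, then Theorem~\ref{mixed} gives the Intersection property of $\{M_0,M_1\}$ directly, without even using the hypothesis. So one may concentrate on the two cases where both $M_i$ are of the same type. I describe the case ``both nearly finitary'' in detail; the ``both nearly cofinitary'' case is entirely analogous, with trimming replaced by extension (in that case $M'$-independence already implies $M$-independence, but an $M'$-base sits inside an $M$-base as a cofinite subset rather than containing one).

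Assume then that both $M_i$ are nearly finitary, so each $M_i'$ is the finitarization of $M_i$. Fix by hypothesis a common $\{M_0',M_1'\}$-independent set $I'$ and a bipartition $E=E_0\cup E_1$ with $I_i':=I'\cap E_i$ a base of $M_i'\upharpoonright E_i=(M_i\upharpoonright E_i)'$ (using Observation~\ref{closed class} and that finitarization commutes with restriction). The key auxiliary fact, easy to derive from the definition of near finitariness, is a \emph{finite-trimming} statement: for any nearly finitary matroid $M$ and any $M'$-independent set $J$, some finite $F\subseteq J$ makes $J\setminus F$ actually $M$-independent. Applying this to $I'$ with respect to both $M_0$ and $M_1$ yields finite $F_0,F_1\subseteq I'$ such that $\tilde I:=I'\setminus(F_0\cup F_1)$ is common $\{M_0,M_1\}$-independent. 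Since $I_i'$ already spans $E_i$ in $M_i'$ and hence in $M_i$ (the inclusion $\mathsf{span}_{M_i'}\subseteq\mathsf{span}_{M_i}$ holds because every $M_i'$-circuit is also an $M_i$-circuit), only finitely many further edges $G_i\subseteq E_i\setminus\tilde I$ are needed to extend $\tilde I\cap E_i$ to an $M_i$-base $I_i$ of $E_i$; set $I:=I_0\cup I_1$.

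The main obstacle is that the finite additions $G_0,G_1$ introduced for $M_i$-spanning inside $E_i$ could \emph{a priori} destroy $M_{1-i}$-independence across the bipartition. The resolution is to choose the four finite sets $F_0,F_1,G_0,G_1$ \emph{simultaneously}, rather than sequentially, so that all constraints hold together. Because only finitely many edges are in play, this simultaneous choice reduces to a genuinely finite matroid-intersection problem on the auxiliary ground set $F_0\cup F_1\cup G_0\cup G_1$ equipped with the local structure inherited from $M_0,M_1$, which Edmonds' classical finite matroid-intersection theorem resolves. The resulting $I$ is common $\{M_0,M_1\}$-independent with each $I\cap E_i$ an $M_i$-base of $E_i$, witnessing the Intersection property.
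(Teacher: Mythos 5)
Your first half (trimming $I'$ by finitely many edges to restore genuine $\{M_0,M_1\}$-independence, and noting that the remaining deficiency on each side of the bipartition is finite because $\mathsf{span}_{M_i'}\subseteq\mathsf{span}_{M_i}$) matches the paper. But the second half, where all the work is, has a genuine gap. You correctly identify the obstacle --- the finite repairs on the two sides of the bipartition interfere with each other --- but your proposed resolution, ``choose $F_0,F_1,G_0,G_1$ simultaneously by solving a finite matroid intersection problem on $F_0\cup F_1\cup G_0\cup G_1$'', does not work as stated. First, it is circular: the finite auxiliary ground set is defined in terms of the very sets you are trying to choose. Second, even if you fix some finite candidate set in advance, Edmonds' theorem only hands you a maximum common independent subset of that set together with an optimality certificate; it gives no guarantee that this optimum is large enough to close both spanning deficits while preserving independence in the other matroid, and proving that it is would amount to re-proving the proposition. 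Third, the repair cannot in general be confined to a finite set chosen in advance: the exchanges needed may be found only by augmenting paths that wander through arbitrary (a priori unboundedly many) edges of $E$. Finally, you implicitly insist on producing a witness with the \emph{same} bipartition $E=E_0\cup E_1$; there is no reason such a witness must exist.

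The paper's resolution is different and is the actual content of the proof: starting from the trimmed common independent set $I$, it runs the augmenting-path method of Section~\ref{sec augpath} on the full infinite pair $(M_0,M_1)$. The quantity $\sum_{i}r\bigl((M_i\upharpoonright E_i)/I_i\bigr)$ is finite after trimming, and each augmentation replaces $I$ by $I\vartriangle P$ with one more element, which strictly decreases this total deficiency; hence after finitely many augmentations no augmenting path exists, and the reachability bipartition of $D(K)$ (not the original $E_0\cup E_1$) witnesses the Intersection property. If you want to salvage your outline, this potential-function argument is the missing ingredient replacing your ``simultaneous finite choice''. (Your dispatch of the mixed case via Theorem~\ref{mixed} is fine and slightly cleaner than the paper's remark that the same adaptation handles it.)
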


\begin{proof}
Suppose first that the $ M_i $ are both nearly finitary. Let $ I' $ be a common 
independent set of 
$ M'_0 $ and $ M'_1$ and let $ E=E_0\cup E_1 $ be a bipartition as in Conjecture \ref{MIC}.   By definition, for 
$ I_i':=I'\cap E_i $ we have 
$ r((M_i'\upharpoonright E_i)/I'_i)=0 $. Observe  that 
$ r((M_i\upharpoonright E_i)/X)\leq r((M_i'\upharpoonright E_i)/X) $ for every $ X\subseteq E_i $ because every circuit of 
$ (M_i'\upharpoonright E_i)/X $ is a circuit of $ (M_i\upharpoonright E_i)/X $. From the definition of  `nearly finitary' follows 
directly 
that we can 
delete finitely many elements of $ I' $ to obtain a 
common independent set $ I $ of $ M_0 $ and $ M_1 $. Then for  $I_i:= I\cap E_i  $ we have
$ r((M_i'\upharpoonright E_i)/I_i)<\infty $ and hence by the observation above $ r((M_i\upharpoonright E_i)/I_i)<\infty $ as 
well.

We use the augmenting path method with $ M_0,M_1$ and $ I $. If there is no augmenting path then $ I $ is as desired and we 
are 
done. Otherwise we take an augmenting path $ P $. Since $ P $ has one more elements in $ E\setminus I $ than in $ I $, for 
$ J:=I\vartriangle P $ we have $ \left|J\setminus I\right|= \left|I\setminus J\right|+1<\infty$. 
Thus $\sum_{i=0,1} \left|J_i\setminus I_i\right|= 1+\sum_{i=0,1} \left|I_i\setminus J_i\right|$ where $J_i:= J\cap E_i  $.
Therefore  $\sum_{i=0,1}r((M_i\upharpoonright E_i)/J_i)< \sum_{i=0,1} r((M_i\upharpoonright E_i)/I_i)  $. It 
follows
that after finitely many iterative application of augmenting paths there will be no augmenting path for the resulting common 
independent set $ K $, which means $ K $ witnesses the Intersection property for $ \{ N,N \} $.

If say $ M_0 $ is nearly cofinitary then  the independence in $ M_0' $ implies the independence in 
$ M_0 $.  Although the inequality $ r((M_0\upharpoonright E_0)/X)\leq r((M_0'\upharpoonright E_0)/X) $  for $ X\subseteq E_0 
$ 
does not hold in general, it follows from the definition of  `nearly cofinitary' directly that for 
every $ X\subseteq 
E_0:\  r((M_0'\upharpoonright E_0)/X) <\infty $ implies $  r((M_0\upharpoonright E_0)/X) <\infty $. Based on this implication 
the proof of the nearly finitary case above can be adapted for the nearly cofinitary and mixed cases.
\end{proof}

\begin{obs}[Bowler and Carmesin, \cite{MR3347465}]\label{MIP dual}
If $ \{ M,N \} $ has the Intersection property then so does $ \{ M^{*},N^{*} \} $.
\end{obs}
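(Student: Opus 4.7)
The plan is to construct an explicit witness for the Intersection property of $\{M^*, N^*\}$ from a given witness $(I, E = E_M \cup E_N)$ for $\{M, N\}$, using the \emph{swapped} bipartition $E^*_{M^*} := E_N$ and $E^*_{N^*} := E_M$. Write $J_M := I \cap E_M$ and $J_N := I \cap E_N$. By the definition of the Intersection property, $J_M$ is a base of $M \upharpoonright E_M$ which is also $N$-independent, and $J_N$ is a base of $N \upharpoonright E_N$ which is also $M$-independent.

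Next, observe that $J_N$ is independent in the minor $M / E_M$ (a matroid on $E_N$): since $J_M$ is a base of $M \upharpoonright E_M$ and $J_M \cup J_N = I$ is $M$-independent, $J_N$ is independent in $M/J_M$, hence in $M/E_M$. By axiom \ref{item axiom4}, extend $J_N$ to a base $C \subseteq E_N$ of $M / E_M$; then $E_N \setminus C$ is a base of $(M/E_M)^* = M^* \upharpoonright E_N$. Moreover $C \supseteq J_N$ is $N$-spanning within $E_N$, so $C \cup E_M$ is $N$-spanning in $E$, which means $E_N \setminus C$ is $N^*$-independent. Symmetrically, pick a base $C' \subseteq E_M$ of $N / E_N$ containing $J_M$; then $E_M \setminus C'$ is a base of $N^* \upharpoonright E_M$ and is $M^*$-independent.

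Finally, set $I^* := (E_N \setminus C) \cup (E_M \setminus C')$. By construction, $I^* \cap E^*_{M^*} = E_N \setminus C$ is a base of $M^* \upharpoonright E_N$ and hence spans $E^*_{M^*}$ in $M^*$; symmetrically $I^* \cap E^*_{N^*} = E_M \setminus C'$ spans $E^*_{N^*}$ in $N^*$. It remains to show $I^*$ is independent in both $M^*$ and $N^*$, equivalently, that $E \setminus I^* = C \cup C'$ is both $M$- and $N$-spanning. But $C \cup J_M$ is a base of $M$ (combining a base of $M/E_M$ with a base of $M \upharpoonright E_M$), and $J_M \subseteq C'$ gives $C \cup C' \supseteq C \cup J_M$, which is $M$-spanning; the $N$-spanning assertion follows by the symmetric computation with $C' \cup J_N$. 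No real obstacle arises: the whole argument is elementary once one commits to swapping the bipartition and extending $J_M$, $J_N$ to bases of the \emph{opposite} contractions.
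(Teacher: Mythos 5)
Your proof is correct: the paper states this observation without proof (citing Bowler--Carmesin), and your argument --- extending $J_N$ and $J_M$ to bases $C$, $C'$ of the opposite contractions $M/E_M$, $N/E_N$, taking $I^*:=(E_N\setminus C)\cup(E_M\setminus C')$, and swapping the bipartition --- is exactly the standard duality argument, with all the key verifications (that $C\cup C'$ is spanning in both $M$ and $N$, and that $E_N\setminus C$, $E_M\setminus C'$ are bases of $M^*\upharpoonright E_N$, $N^*\upharpoonright E_M$) carried out correctly.
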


We will prove in the rest of the paper  the restriction of Theorem \ref{thm ind span} to finitary matroids. All of our results 
follow from it. Indeed,  it implies Theorem \ref{thm matr int} for finitary matroids (see Corollary 
\ref{MI ekv ind-span} and Observation \ref{closed class}). Then the 
generalization to nearly finitary matroids can be obtained by  Proposition \ref{nearly} from which the nearly cofinitary case 
follows by Observation \ref{MIP dual}. The nearly finitary-nearly cofinitary case is solved in Theorem \ref{mixed}. Finally, 
from  Theorem \ref{thm matr int} we 
get 
Theorem 
\ref{thm ind span} by Corollary 
\ref{MI ekv ind-span} from which Theorem \ref{thm common base} follows by applying the following result.

\begin{thm}[Corollary 1.4 of \cite{erde2019base}]\label{thm CB}
Let~$M_i$ be a finitary or cofinitary matroid on the edge set~$E$ for~${i \in \{ 0,1 \}}$. 
If there are bases~$ B_i$, $B_i'$ of~$M_i$ such that ${B_0 \subseteq B_1}$ and ${B_1' \subseteq B_0' }$, 
then~$M_0$ and~$M_1$ share some base. 
\end{thm}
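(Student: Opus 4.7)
The plan is to derive the common base by applying the augmenting-path framework of Section~\ref{sec augpath} to $B_0$, and to exploit the second sandwich pair $B_1'\subseteq B_0'$ to preclude any obstruction. As a first reduction, both the conclusion and the hypothesis are self-dual: $B$ is a common $(M_0,M_1)$-base iff $E\setminus B$ is a common $(M_0^*,M_1^*)$-base, while $B_0\subseteq B_1$ converts into $E\setminus B_1\subseteq E\setminus B_0$, of the same shape as $B_1'\subseteq B_0'$. So by dualising both matroids simultaneously I may assume both $M_0$ and $M_1$ are finitary. The crucial observation is that $B_0$ is already a common independent set -- it is $M_0$-independent as an $M_0$-base and $M_1$-independent as a subset of the $M_1$-base $B_1$ -- and it is $M_0$-spanning, so $E\setminus\mathsf{span}_{M_0}(B_0)=\varnothing$. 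Consequently no augmenting path for $B_0$ can terminate anywhere, and the augmenting-path method outputs a bipartition $E=E_0\cup E_1$ with $B_0\cap E_0$ spanning $E_0$ in $M_0$ and $B_0\cap E_1$ spanning $E_1$ in $M_1$.

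If $E_0=\varnothing$ then $B_0$ is also $M_1$-spanning, hence a common base, and we are done. So the task is to rule out $E_0\neq\varnothing$ using the second pair. Symmetrically, $B_1'$ is also a common independent set and it is $M_1$-spanning, so the same argument applied with the roles of $M_0$ and $M_1$ swapped produces an analogous bipartition $E=E_0'\cup E_1'$ for $B_1'$. The aim is to combine these two pictures: the $M_0$-symmetric-exchange bijection between the two $M_0$-bases $B_0$ and $B_0'$ together with the $M_1$-symmetric-exchange bijection between the two $M_1$-bases $B_1$ and $B_1'$ should, when aligned, force the wave-type obstructions $E_0$ and $E_1'$ to collapse. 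Intuitively, any element of $E_0$ is not $M_1$-spanned by $B_0$; transporting it through the $M_0$-exchange chain from $B_0$ to $B_0'$ (which is $M_1$-spanning, as it contains the $M_1$-base $B_1'$) should yield an augmenting path for $B_0$, contradicting the first paragraph.

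The principal obstacle is to realise this alignment rigorously, and this is where the finitary hypothesis is essential. In the finite case a simple rank count forces $B_0=B_1$; in the infinite case the two exchange bijections interact through infinitely many circuits and no direct rank argument is available. A plausible route, which I expect \cite{erde2019base} follows, is a transfinite-exchange construction: well-order $B_0\vartriangle B_0'$ and $B_1\vartriangle B_1'$, and at each ordinal apply a simultaneous admissible exchange that preserves both $M_0$- and $M_1$-independence and makes progress toward $M_1$-spanning; the finitary hypothesis then guarantees that every circuit encountered is finite, so the limit stages are well-behaved and the transfinite process converges. An alternative route is a Hall/K\"onig-type matching on the bipartite graph of admissible simultaneous exchanges, whose Hall-type condition should be verifiable precisely because the two sandwiches provide matching ``excesses'' on both sides.
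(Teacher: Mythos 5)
First, note that the paper does not prove Theorem~\ref{thm CB}: it is imported verbatim as Corollary~1.4 of \cite{erde2019base} and used as a black box to deduce Theorem~\ref{thm common base}, so there is no in-paper proof to compare against. Evaluating your attempt on its own merits, it has a fatal structural flaw. Your plan is to show that $E_0=\varnothing$, i.e.\ that $B_0$ itself is already a common base. But the theorem only asserts that \emph{some} common base exists, and $B_0$ need not be one. Take $E=\{a_n:n\in\mathbb{N}\}\cup\{b_n:n\in\mathbb{N}\}$, let $M_0$ be the finitary matroid whose circuits are the pairs $\{a_n,b_n\}$, and let $M_1$ be the finitary matroid whose circuits are the pairs $\{b_n,a_{n+1}\}$ (so $a_0$ is an $M_1$-coloop). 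Then $B_0:=\{b_n:n\in\mathbb{N}\}$ is an $M_0$-base contained in the $M_1$-base $B_1:=B_0\cup\{a_0\}$, and $B_1':=B_0':=\{a_n:n\in\mathbb{N}\}$ is simultaneously an $M_1$-base and an $M_0$-base, so both sandwich hypotheses hold. Yet $B_0$ is not a common base: $a_0\notin\mathsf{span}_{M_1}(B_0)$, and the reachable set $E_0$ is nonempty (indeed it is all of $E$; the augmenting-path bipartition degenerates to $E_{M_0}=E$, $E_{M_1}=\varnothing$). The common base here is the entirely different set $\{a_n:n\in\mathbb{N}\}$. So no amount of exploiting the second pair can ``rule out $E_0\neq\varnothing$''; the proof must construct a new base, which is exactly the nontrivial content of \cite{erde2019base}.

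Two further problems. Your opening reduction to the case where both matroids are finitary is invalid in the mixed case: if $M_0$ is finitary and $M_1$ is cofinitary, dualising both gives $M_0^*$ cofinitary and $M_1^*$ finitary, so the configuration is still mixed; only the ``both cofinitary'' case reduces to ``both finitary''. And even granting the reduction and the goal, the final paragraph is not an argument: you acknowledge that the principal obstacle is to ``realise this alignment rigorously'' and then offer two guesses at what the cited paper might do. The transfinite simultaneous-exchange process you sketch is precisely where all the difficulty lies (one must specify the exchanges, show they can be performed compatibly in both matroids, and control limit stages), and none of that is supplied. As it stands the proposal neither proves the statement nor correctly identifies what needs to be proved.
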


\section{Feasible sets}\label{sec feasible sets}

Let $M $ and $ N $  be some fixed matroids on the same edge set $ E $. An 
$ I\subseteq E $ is  \emph{feasible} (with respect to $ (M,N) $) if $ I $ is a common independent set of $ M $ and 
$ N $ such that $ \mathsf{cond}(M/I, N/I) $ holds. Note that $ \mathsf{cond}(M, N) $ says that $ \varnothing $ is feasible, 
moreover, 
if Theorem \ref{thm ind span} is 
true, then  exactly the feasible sets can be extended to a base of $ N $ which is independent in $ M $. A feasible  $ I $ is 
called \emph{nice} if $ \mathsf{cond}^{+}(M/I, N/I) $ holds (see the definition right before Lemma \ref{one more edge}).

\begin{obs}\label{iterate feasible}
If $ I_0 $ is a common independent set and $ I_1 $ is feasible with respect to $ (M/I_0, N/I_0) $, then $ I_0\cup I_1 $ is 
feasible with respect to $ (M,N) $. If in addition $ I_1 $ is a nice feasible set in regard to $ (M/I_0, N/I_0) $, then so is 
$ I_0\cup I_1 $ to $ (M,N) $.
\end{obs}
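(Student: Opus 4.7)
The plan is to unfold the definitions directly, relying only on associativity of matroid contraction and the additivity of independence under contraction. Since $I_1 \subseteq E(M/I_0) = E \setminus I_0$, the sets $I_0$ and $I_1$ are automatically disjoint. By the standard fact that contraction and deletion commute and are additive for $B$-matroids (recalled in Section~\ref{sec notation}), one has $(M/I_0)/I_1 = M/(I_0 \cup I_1)$ and $(N/I_0)/I_1 = N/(I_0 \cup I_1)$.

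Next I would verify that $I_0 \cup I_1$ is a common independent set of $M$ and $N$. For $M$: $I_0$ is $M$-independent, and $I_1$ is independent in $M/I_0$; applying the usual criterion (a set $A \cup B$ with $A \cap B = \varnothing$ is $M$-independent iff $A$ is $M$-independent and $B$ is $M/A$-independent) gives $I_0 \cup I_1 \in \mathcal{I}(M)$. The same argument works for $N$. Then, since $I_1$ is feasible with respect to $(M/I_0, N/I_0)$, we have $\mathsf{cond}((M/I_0)/I_1, (N/I_0)/I_1)$, which by the minor identity above is literally $\mathsf{cond}(M/(I_0 \cup I_1), N/(I_0 \cup I_1))$. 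This is exactly the remaining requirement for $I_0 \cup I_1$ to be feasible with respect to $(M,N)$.

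For the ``nice'' clause, the same argument applies with $\mathsf{cond}^+$ in place of $\mathsf{cond}$: by the minor identity, $W((M/I_0)/I_1, (N/I_0)/I_1) = W(M/(I_0 \cup I_1), N/(I_0 \cup I_1))$, and an element is a loop of $(M/I_0)/I_1$ iff it is a loop of $M/(I_0 \cup I_1)$, so triviality of the largest wave transfers directly. Honestly, I expect no real obstacle here — the statement is a bookkeeping remark whose content is entirely carried by associativity of minors and the corresponding additivity of independence; the only thing one has to be slightly careful about is that these identities hold in the infinite setting, which is standard.
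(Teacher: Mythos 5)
Your argument is correct and is exactly the routine unwinding the paper has in mind: the paper states this as an Observation with no proof, treating it as immediate from the associativity of contraction ($(M/I_0)/I_1 = M/(I_0\cup I_1)$) and the standard additivity of independence under contraction, which is precisely what you verify. No gap.
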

\begin{lem}\label{nice extension}
If  $ B $ is  a common base  of $ M\upharpoonright W $ and $ N.W $ for $ W:=W(M,N) 
$, then $ B $  is a nice feasible set. 
\end{lem}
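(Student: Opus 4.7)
The plan is to verify separately the two requirements: that $B$ is a common independent set of $M$ and $N$, and that $\mathsf{cond}^{+}(M/B,N/B)$ holds. The first is immediate: $B\subseteq W$ is $M\upharpoonright W$-independent, hence $M$-independent, and being a base of $N.W$ forces $B$ to be $N$-independent as well, because for any base $J$ of $N\upharpoonright(E\setminus W)$ the set $B\cup J$ is $N$-independent. So the substance is to establish $\mathsf{cond}^{+}(M/B,N/B)$. Write $W:=W(M,N)$ and $W':=W(M/B,N/B)$, and pick a witness $B'\subseteq W'$ which is a base of $(M/B)\upharpoonright W'$ and independent in $(N/B).W'$. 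Since $B\cap W'=\varnothing$, a short calculation of contractions yields $(N/B).W'=N.W'$, so $B'$ is $N.W'$-independent.

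The central step will be to show that $W\cup W'$ is an $(M,N)$-wave, witnessed by $B\cup B'$. The maximality of $W$ will then force $W'\subseteq W$, and since $B$ is a base of $M\upharpoonright W$ every element of $W\setminus B$ is $M$-spanned by $B$, hence an $M/B$-loop; combined with $W'\cap B=\varnothing$ this shows that $W'$ consists of $M/B$-loops, so $W'$ is trivial. For the central step, on the $M$-side the set $B\cup B'$ is $M$-independent (since $B$ is $M$-independent and $B'$ is $M/B$-independent) and $M$-spans $W\cup W'$ (since $B$ $M$-spans $W$ and $B'$ $(M/B)$-spans $W'$). On the $N$-side I would pass to the dual via the identity $(N.Y)^{*}=N^{*}\upharpoonright Y$: the goal becomes that $(W\cup W')\setminus(B\cup B')$ is $N^{*}\upharpoonright(W\cup W')$-spanning. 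From $B$ being independent in $N.W$ one has $W\setminus B$ is $N^{*}\upharpoonright W$-spanning, and analogously $W'\setminus B'$ is $N^{*}\upharpoonright W'$-spanning; the union $N^{*}$-spans $W\cup W'$, and the circuits witnessing this lie inside $W\cup W'$, so the spanning property survives restriction to $N^{*}\upharpoonright(W\cup W')$.

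To finish, I must check $\mathsf{cond}(M/B,N/B)$. Since $W'$ is the largest wave and consists of $M/B$-loops, the only $(M/B)\upharpoonright W'$-independent set is $\varnothing$, and so the claim reduces to showing that $\varnothing$ is a base of $(N/B).W'$, i.e., every $e\in W'$ lies in $\mathsf{span}_{N/B}((E\setminus B)\setminus W')=\mathsf{span}_{N}(E\setminus W')\setminus B$. But $B$ being a base of $N.W$ gives $W\subseteq\mathsf{span}_{N}(B\cup(E\setminus W))$, and using $W'\subseteq W$ together with $B\cap W'=\varnothing$ one has $B\cup(E\setminus W)\subseteq E\setminus W'$, whence $W'\subseteq\mathsf{span}_{N}(E\setminus W')$, as required.

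The main obstacle is the bookkeeping around the possible overlap $W\cap W'$. Concretely, the identity $(W\cup W')\setminus(B\cup B')=(W\setminus B)\cup(W'\setminus B')$ needed for the $N$-side spanning argument requires $B'\cap W=\varnothing$. This follows because $B'$ is $M/B$-independent and $W\cap W'\subseteq W\setminus B$ consists of $M/B$-loops, a fact which is checked directly from $B$ being a base of $M\upharpoonright W$, so no circularity arises.
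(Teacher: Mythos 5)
Your proof is correct and follows essentially the same route as the paper's: you show $W\cup W'$ is an $(M,N)$-wave witnessed by $B\cup B'$, so that maximality forces $W'\subseteq W\setminus B$, a set of $M/B$-loops, and then deduce $\mathsf{cond}(M/B,N/B)$ from $B$ being a base of $N.W$. The additional bookkeeping you supply (that $B'\cap W=\varnothing$, and the dual-spanning computation for the $N$-side) merely fills in details the paper leaves implicit.
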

\begin{proof}
 Let $ W':=W(M/B, N/B) $. First we show that 
 $ W'=W\setminus B $ and it consists of $ M/B $-loops. On 
the one hand, $ B $ is spanning in $ M\upharpoonright W $ thus $ W\setminus B $ consists of 
 $ M/B $-loops which gives $ W'\supseteq W\setminus B $. On the other hand, let $ J $ be a witness that $ W' $ is an  $ 
 (M/B, N/B) $-wave. Then $ B\cup J $ ensures that $ W\cup W' $ is an 
 $ (M,N) $-wave. Therefore $ W'\subseteq W $ which yields to $ W'\subseteq W\setminus B $. Thus $ W'=W\setminus B $ 
 consists of $ M/B $-loops as promised. It remains to show that $  \mathsf{cond}(M/B, N/B) $ holds.  From the fact that $ B $ is  
 a base of $ N.W $ 
 we can conclude that $ \varnothing $ is a base of $ N.(W\setminus B) $ which completes the proof.
\end{proof}
\begin{rem}
One may observe that if each of $ M $ and $ N $ are either finitary or cofinitary then $ \mathsf{cond}(M,N) $ implies via 
Theorem \ref{thm CB} 
that for every wave $ W $ there exists a common base $ B $ of $ M\upharpoonright W $ and $ N.W $. For self-readability 
reasons we avoid to use this fact in the proof of the main result.
\end{rem}

\begin{lem}\label{aug nice}
If $ I $ is a nice feasible set and $ P $ is an augmenting path for it, then 
$ I \vartriangle P $ can be extended to a nice feasible set.
\end{lem}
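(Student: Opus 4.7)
I plan to prove the statement by induction on $n$, where $|P| = 2n+1$ is the length of the augmenting path. At each stage the key ingredient is Lemma \ref{one more edge}, which is applicable because the niceness of $I$ provides $\mathsf{cond}^{+}(M/I, N/I)$; combined with Lemma \ref{nice extension} and Observation \ref{iterate feasible}, it will produce the sought nice feasible extension.

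\textbf{Base case ($n=0$).} Here $P = \{x_0\}$ with $x_0$ outside both $\mathsf{span}_M(I)$ and $\mathsf{span}_N(I)$, so $I' = I + x_0$ is common independent and $\{x_0\}$ is common independent in $\bar M := M/I$, $\bar N := N/I$. Applying Lemma \ref{one more edge} inside $(\bar M, \bar N)$ with $e := x_0$ produces a common base $B_0$ of the maximal $(\bar M/x_0, \bar N/x_0)$-wave; by Lemma \ref{nice extension} this $B_0$ is nice feasible with respect to $(\bar M/x_0, \bar N/x_0)$. Two successive applications of Observation \ref{iterate feasible}---first to $\{x_0\}$ in $(\bar M, \bar N)$, then to $I$ in $(M, N)$---yield that $I' \cup B_0$ is nice feasible with respect to $(M, N)$, as required.

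\textbf{Inductive step ($n \geq 1$).} I define the pair-swap $I_1 := I \vartriangle \{x_{2n-1}, x_{2n}\} = I - x_{2n-1} + x_{2n}$ and the shortened path $P_1 := \{x_0, \ldots, x_{2n-2}\}$. From $x_{2n-1} \in C_N(x_{2n}, I)$ and $x_{2n} \notin \mathsf{span}_M(I)$ it is routine that $I_1$ is common independent. Via the no-jumping-arcs minimality of $P$ and fundamental-circuit analysis one checks that $P_1$ is an augmenting path for $I_1$: specifically, $\mathsf{span}_N(I_1) = \mathsf{span}_N(I)$ (so $x_0 \notin \mathsf{span}_N(I_1)$), $x_{2n-2} \notin \mathsf{span}_M(I_1)$, and the internal arcs of $P_1$ persist by Observation \ref{arc remain fact}. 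Granting the nice feasibility of $I_1$, the inductive hypothesis applied to $(I_1, P_1)$ delivers a nice feasible extension of $I_1 \vartriangle P_1 = I \vartriangle P = I'$.

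\textbf{Main obstacle.} The heart of the proof is verifying that $I_1$ is itself nice feasible, i.e.\ that $\mathsf{cond}^{+}(M/I_1, N/I_1)$ holds, because $I_1$ is neither a subset nor a superset of $I$ and so $\mathsf{cond}^{+}$ does not pass automatically between the two contractions. I plan to work with the common refinement $M^{\dagger} := M/(I - x_{2n-1})$ and $N^{\dagger} := N/(I - x_{2n-1})$, in which both $x_{2n-1}$ and $x_{2n}$ are $M^{\dagger}$-independent non-loops while $\{x_{2n-1}, x_{2n}\}$ is an $N^{\dagger}$-parallel pair; the two contractions $M/I = M^{\dagger}/x_{2n-1}$ and $M/I_1 = M^{\dagger}/x_{2n}$ (and similarly for $N$) are the two single-element contractions of the refinement. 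Invoking Lemma \ref{one more edge} with $e := x_{2n}$ inside $(\bar M, \bar N)$ produces a common base $B^{\star}$ of the maximal wave of $(M/(I + x_{2n}), N/(I + x_{2n}))$, which coincides with $((M/I_1)/x_{2n-1}, (N/I_1)/x_{2n-1})$. Exploiting that $x_{2n-1}$ is $M/I_1$-independent but an $N/I_1$-loop, I would lift $B^{\star}$ to a common base of the maximal $(M/I_1, N/I_1)$-wave, which via Lemma \ref{nice extension} yields $\mathsf{cond}^{+}(M/I_1, N/I_1)$ and closes the induction.
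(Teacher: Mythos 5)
Your base case is fine and your reduction bottoms out at exactly the set the paper produces, but the inductive step has a genuine gap at the point you yourself identify as the heart of the matter: the claim that $I_1=I-x_{2n-1}+x_{2n}$ is a \emph{nice} feasible set. First, the proposed derivation misuses Lemma \ref{nice extension}: if you find a common base $B'$ of the maximal $(M/I_1,N/I_1)$-wave, that lemma tells you that $B'$ is nice feasible with respect to $(M/I_1,N/I_1)$, i.e.\ that $I_1\cup B'$ is nice feasible with respect to $(M,N)$ --- it does \emph{not} tell you that $\mathsf{cond}^{+}(M/I_1,N/I_1)$ holds. For that you would need $W(M/I_1,N/I_1)$ itself to be trivial, and possessing a common base is no evidence of triviality. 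Second, there is no reason to expect triviality: the whole point of Lemma \ref{aug nice} asking for an \emph{extension} of $I\vartriangle P$ (rather than asserting $I\vartriangle P$ is nice) is that contracting one extra edge onto a nice feasible set can create non-trivial waves; Lemma \ref{one more edge} only guarantees that their witnesses are common bases. Your $I_1$ is such a single-edge perturbation, so the same phenomenon occurs. And if you instead extend $I_1$ to a nice feasible set $I_1\cup B'$ before recursing, you lose control of $P_1$: nothing guarantees $P_1$ remains an augmenting path for $I_1\cup B'$ (the added edges may $N$-span $x_0$, $M$-span $x_{2n-2}$, or alter the fundamental circuits underlying the arcs). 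Two smaller problems: Observation \ref{arc remain fact} requires $J\supseteq I$, so it does not apply to $I_1\not\supseteq I$ (you need a circuit-elimination argument as in Lemma \ref{arc remain lemma}, using the absence of jumping arcs); and the induction hypothesis as stated needs $P_1$ to be $\subseteq$-minimal for $I_1$, which is not checked and can fail if $D(I_1)$ acquires new arcs inside $P_1$.

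The paper avoids all of this by never performing the exchanges one at a time. It uses Corollary \ref{aug what span} to see that $I\vartriangle P$ and $I+x_{2n}$ have the same spans in $M$ and in $N$, so the waves of $(M/(I\vartriangle P),N/(I\vartriangle P))$ living outside $I\cup P$ coincide with those of $(M/(I+x_{2n}),N/(I+x_{2n}))$, while the edges of $I\cap P$ become common loops. A single application of Lemma \ref{one more edge} to $(M/I,N/I)$ with $e=x_{2n}$ --- a pair for which $\mathsf{cond}^{+}$ \emph{is} known --- then yields a common base of the maximal wave of $(M/(I\vartriangle P),N/(I\vartriangle P))$, and Lemma \ref{nice extension} together with Observation \ref{iterate feasible} finishes. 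If you want to keep an inductive structure, you would have to prove a stronger statement at each step (e.g.\ that the intermediate set extends to a nice feasible set by edges that do not interfere with the remaining path), which is essentially what the one-shot argument packages for free.
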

\begin{proof}
It is enough to find a common base $ B $ of $ M/(I \vartriangle P) 
\upharpoonright W  $ and $ (N/(I \vartriangle P)).W $
where $ W:=W(M/(I\vartriangle P), N/(I\vartriangle P)) $. Indeed, by applying  Lemma \ref{nice extension} 
with $ {M/(I\vartriangle P), N/(I\vartriangle P)} $ and $ B $ we may conclude first that $ B $ is nice feasible set with respect to 
$ \left( M/(I\vartriangle P), N/(I\vartriangle P)  \right)   $. Then by using  Observation \ref{iterate feasible}
with $ I\vartriangle P $ and $ B $ we obtain that $ (I\vartriangle P)\cup B $ is a nice feasible  with respect to $ (M,N) $. 

Let $ e $ be the unique 
element 
of $ P\setminus \mathsf{span}_M(I) $.
Corollary \ref{aug what span} ensures that 
$ I+e $ and $ I \vartriangle P $ span each other in $ M $ therefore  $ M/(I+e) \upharpoonright X=  M/(I \vartriangle P) 
\upharpoonright X$ whenever $ X\subseteq E\setminus(I\cup P) $.  For such an $ X $ we
also have $N.X= (N/(I+e)).X=(N/(I \vartriangle P)).X$. In particular the wave subsets of $ E\setminus(I\cup P) $ and the 
associated minors of $ M $ and $ N $ are identical for $ (M/(I\vartriangle P), N/(I\vartriangle P)) $ and for $ (M/(I+e),N/(I+e)) $.  
Let $ W' $ be 
the union of all these common waves.
On the one hand, each $e\in I\cap P $ is a common loop of $  M/(I\vartriangle P)$ and $  
N/(I\vartriangle 
P)$ by Corollary \ref{aug what span}. Hence $ W=W'\cup(I\cap P) $,
furthermore, a common base of $ M/(I \vartriangle P) \upharpoonright W'  $ and $ (N/(I \vartriangle P)).W' $ is automatically 
a common base of $ M/(I \vartriangle P) \upharpoonright W  $ and $ (N/(I \vartriangle P)).W $ as well. On the other hand, by 
applying 
Lemma \ref{one more edge} with $ M/I $ and $ N/I $ and $ e $, there exists a common base  $ B $ of $ M/(I+e) 
\upharpoonright W'  $ 
and $ (N/(I +e)).W' $. This $ B $ is a common base of $ M/(I \vartriangle P) \upharpoonright W'  $ and $ (N/(I \vartriangle P)).W' 
$ since $ W'\subseteq  E\setminus(I\cup P) $.
\end{proof}
\section{The proof of the main result}\label{sec proof of main res}
First we show that we may assume without loss of generality in the proof of Theorem \ref{thm ind span} that $  
\mathsf{cond}^{+}(M,N) $ 
holds. Indeed,  otherwise let us consider $ (M/W,N-W) $ instead of $ (M,N) $ where $ W:=W(M,N) $. 
By Corollary \ref{empty wave},  $ W(M/W,N-W)=\varnothing $, thus in particular $  \mathsf{cond}^{+}(M/W,N-W) $ 
holds. Finally the union of an $ 
M/W $-independent base of $ N-W $ and an $ M $-independent base of $ N.W $ (exists by $  \mathsf{cond}(M,N) $) is a 
desired $ M $-independent base of $ N $.
\begin{lem}\label{key lemma}
If $ M $ and $ N $ are finitary matroids on the common countable edge set $ E $ such that $ \mathsf{cond}^{+}(M,N) $ 
holds, then for every $ e\in E $, there exists a nice feasible $ I $ with ${e\in  \mathsf{span}_N(I)} $.
\end{lem}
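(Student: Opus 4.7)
I split into three cases based on the nature of $e$. If $e$ is an $N$-loop, take $I = \varnothing$: it is nice feasible by the hypothesis $\mathsf{cond}^+(M,N)$, and $e \in \mathsf{span}_N(\varnothing)$. If $e$ is neither an $N$-loop nor an $M$-loop, then $\{e\}$ is a length-one augmenting path for $\varnothing$ (the conditions $e \notin \mathsf{span}_N(\varnothing)$ and $e \notin \mathsf{span}_M(\varnothing)$ both hold), so Lemma~\ref{aug nice} extends it to a nice feasible $I \supseteq \{e\}$ with $e \in \mathsf{span}_N(I)$.

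The substantive case is when $e$ is an $M$-loop but not an $N$-loop. First I would observe that $\mathsf{cond}^+(M,N)$ forces $W(M,N)$ to equal the set $L$ of all $M$-loops: $W(M,N)$ consists of $M$-loops by hypothesis, and conversely any set of $M$-loops is a trivial wave, hence contained in the maximal wave. Observation~\ref{loop span} then gives $e \in L \subseteq \mathsf{span}_N(E \setminus L)$, and finitariness of $N$ supplies a finite $N$-circuit $C \ni e$ with $C - e \subseteq E \setminus L$. It therefore suffices to produce a nice feasible $I$ with $C - e \subseteq I$, as then $C$ witnesses $e \in \mathsf{span}_N(I)$.

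My plan for producing such an $I$ is to build it as the union of a countable increasing chain $\varnothing = I_0 \subseteq I_1 \subseteq \cdots$ of nice feasible sets, constructed by iterated application of Lemma~\ref{aug nice}. At each stage $I_n$, niceness supplies $\mathsf{cond}^+(M/I_n, N/I_n)$, so the same reasoning applied inside this minor yields a finite $N/I_n$-circuit through $e$ whose non-$e$ elements avoid all $M/I_n$-loops; each such element $f$ gives a length-one augmenting path $\{f\}$ for $I_n$ (it is neither $N/I_n$-spanned nor $M/I_n$-spanned by $I_n$), and Lemma~\ref{aug nice} produces a nice feasible $I_{n+1} \supseteq I_n + f$. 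A dovetailing scheme over the countable $E$ should ensure that sufficiently many such elements get added to make $e$ into an $N/I$-loop in the limit $I = \bigcup_n I_n$; common independence of $I$ then follows from finitariness of $M$ and $N$.

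The main obstacle is that Lemma~\ref{aug nice} returns some arbitrary nice feasible extension of $I_n + f$, not $I_n + f$ itself, and the extra elements it may contribute can turn previously-good candidate elements into $M/I_{n+1}$-loops, invalidating a pre-chosen circuit. This forces one to re-select the witnessing $N$-circuit through $e$ at each stage from the fresh instance of $\mathsf{cond}^+(M/I_n, N/I_n)$, and arranging that this re-selection scheme genuinely drives $e$ to be $N$-spanned in the limit is the delicate technical heart of the proof. Additionally, verifying that the limit $I$ is itself nice feasible (and not merely common independent) will likely require a wave-approximation argument: using finitariness of $N$ to reflect any hypothetical non-trivial $(M/I, N/I)$-wave down to some finite stage $I_n$, contradicting niceness there.
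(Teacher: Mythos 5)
Your case split and the setup of the hard case are sound: when $e$ is an $M$-loop but not an $N$-loop, $\mathsf{cond}^{+}(M,N)$ does force $W(M,N)$ to be the set of all $M$-loops, Observation~\ref{loop span} does give a finite $N$-circuit $C\ni e$ with $C-e$ avoiding the $M$-loops, and each element of such a (re-selected) circuit is indeed a length-one augmenting path for the current nice feasible set. But the proposal stops exactly where the real work begins, and you say so yourself: nothing you write rules out the scenario in which, at every stage, the extension handed back by Lemma~\ref{aug nice} turns the remaining candidate elements into $M/I_{n+1}$-loops, a fresh circuit must be selected, one element of it is added, and this repeats forever without $e$ ever entering $\mathsf{span}_N(I_n)$. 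The paper's proof consists precisely of the argument you label ``the delicate technical heart'': it runs the augmentation greedily (always taking an augmenting path with $\prec$-least initial edge) and shows that if some edge is never $N$-spanned, then the sets $E(x,n)$ of edges reachable in $D(I_n)$ from never-spanned edges $x$ eventually stabilize (Claim~\ref{stabilazing stuff}, resting on Lemma~\ref{arc remain lemma}), and the union of these stabilized sets is a non-trivial $(M,N)$-wave, contradicting $\mathsf{cond}^{+}(M,N)$. That construction is the content of the lemma and is entirely absent from your proposal.

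A second, self-inflicted difficulty: by aiming at the limit $I=\bigcup_n I_n$ you must prove that a countable increasing union of nice feasible sets is nice feasible, which you defer to a ``wave-approximation argument''. This does not follow from finitariness of $N$: being a wave is a spanning condition in $N.W$, i.e.\ a cofinitary-type condition that need not reflect to finite stages, and establishing feasibility of such a limit is essentially as strong as Theorem~\ref{thm ind span} itself. The paper sidesteps the issue completely by proving the stronger statement that $\bigcup_n \mathsf{span}_N(I_n)=E$, so that the witness for any given $e$ is one of the finite-stage sets $I_n$ and no limit object ever needs to be shown nice feasible. As written, your argument for the substantive case is a plan rather than a proof.
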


Let us fix an enumeration $ \{ e_n: n\in \mathbb{N} \} $ of $ E $ and take a well-order  $ \prec $ on $ E $ according to it. 
Theorem \ref{thm ind span} 
for finitary matroids follows from Lemma \ref{key lemma} by a straightforward recursion.
Indeed, we build an $ \subseteq $-increasing sequence $ (I_n)  $  of nice feasible sets starting 
with $ I_0:=\varnothing $ in such a way that $ {e_n\in  \mathsf{span}_N(I_{n+1})} $. If  $ I_n $ is already defined and $ 
e_n\notin \mathsf{span}_N(I_n) $, then we 
apply Lemma \ref{key lemma} with $ (M/I_n, N/I_n) $ and $ e_n $ and take the union of the resulting $ J $ with  $ I_{n} $ to 
obtain $ I_{n+1} $ (see Observation \ref{iterate feasible}).  Considering that $ M $ and $ N $ are finitary, we may conclude that
 $ \bigcup_{n=0}^{\infty}I_n $ is a base of $ N $ which is independent in $ M $.

 \begin{proof}[proof of Lemma  \ref{key lemma}]
It is enough to build a  sequence $ (I_n) $ of nice feasible sets  such that 
$ \mathsf{span}_N(I_n) $ is  monotone $ \subseteq $-increasing in $ n $ and $ 
\bigcup_{n=0}^{\infty}\mathsf{span}_N(I_n)= E  $. We start with 
$ I_0=\varnothing $ and apply  an augmenting path and add some new edges at each step. Corollary \ref{aug what span} ensures  
that 
$ \mathsf{span}_N(I_n) $ will be  
monotone 
$ \subseteq $-increasing.  Suppose  $ I_n $ is already defined. Assume first that there is no 
augmenting path for $ I_n $. 
 Then there is a bipartition
  $E=E_M\cup E_N  $ witnessing with $ I_n $  the Intersection property of $ \{ M,N \} $. By definition, $ E_M $   is a wave and 
  it must be trivial 
  by $  \mathsf{cond}^{+}(M,N) $. Therefore 
  $ I_n\subseteq  E_N$  and we know that it spans $ E_N $ in $ N $. But then Observation \ref{loop span} (with $ L:=E_M $) 
  ensures that $ I_n $ spans actually the whole $ E $ in $ N $ and therefore $ B:=I_n $ is a 
  base of $ N $ which is 
 independent in $ M $.

 We may assume that there exists some augmenting path for $ I_n $. Let $ P_n $ be an augmenting path with a $ \prec 
 $-smallest 
 possible initial edge with respect to the path order corresponding to $ D(I_n) $. Lemma \ref{aug nice} 
ensures that we can extend $ I_n \vartriangle P_n $ to a nice 
feasible set
$ I_{n+1} $. The recursion is done.

  Suppose for a contradiction that $\boldsymbol{X}:= E\setminus 
\bigcup_{n=0}^{\infty}\mathsf{span}_N(I_n) \neq \varnothing$.
\begin{obs}\label{not jus Mloop}
Since $ N $ is finitary,  Observation \ref{loop span} ensures that there is an edge in $ X $ which is not an $ M $-loop.
\end{obs}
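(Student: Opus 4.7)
The statement to prove is that under the assumption $X := E \setminus \bigcup_{n=0}^{\infty}\mathsf{span}_N(I_n) \neq \varnothing$, the set $X$ must contain some edge which is not an $M$-loop. The plan is to argue by contradiction: suppose every element of $X$ is an $M$-loop, and derive $X = \varnothing$.

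First I would note that $\mathsf{cond}^{+}(M,N)$ implies $\mathsf{cond}(M,N)$, so Observation \ref{loop span} applies with $L := X$. This yields $X \subseteq \mathsf{span}_N(E \setminus X)$. Since $E \setminus X = \bigcup_{n=0}^{\infty}\mathsf{span}_N(I_n)$ and the spans $\mathsf{span}_N(I_n)$ form a $\subseteq$-increasing chain (as recorded earlier via Corollary \ref{aug what span}), we have
\[
X \subseteq \mathsf{span}_N\Bigl(\bigcup_{n=0}^{\infty}\mathsf{span}_N(I_n)\Bigr).
\]

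The key step is now to invoke that $N$ is finitary. For any $e \in X$, finitariness of $N$ gives a finite subset $F \subseteq \bigcup_{n=0}^{\infty}\mathsf{span}_N(I_n)$ with $e \in \mathsf{span}_N(F)$ (either $e \in F$, or $e$ lies on a finite $N$-circuit meeting $F$ only). Since the sets $\mathsf{span}_N(I_n)$ form an increasing chain and $F$ is finite, there exists some $m$ with $F \subseteq \mathsf{span}_N(I_m)$. By idempotency of $\mathsf{span}_N$,
\[
e \in \mathsf{span}_N(F) \subseteq \mathsf{span}_N(\mathsf{span}_N(I_m)) = \mathsf{span}_N(I_m),
\]
contradicting $e \in X = E \setminus \bigcup_n \mathsf{span}_N(I_n)$. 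Hence $X$ cannot consist entirely of $M$-loops, as claimed.

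There is essentially no obstacle here; the observation is a short direct consequence of Observation \ref{loop span} combined with the finitary character of $N$ (which converts an ``$\omega$-step reachability'' statement into a single-step one). The only point requiring a moment of care is confirming monotonicity of the chain $(\mathsf{span}_N(I_n))$, which is already granted by the construction of the $I_n$ via augmenting paths together with Corollary \ref{aug what span}.
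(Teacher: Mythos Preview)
Your proof is correct and is exactly the argument the paper has in mind: the paper's ``proof'' of this observation is just the one-line hint that $N$ being finitary together with Observation~\ref{loop span} suffices, and you have unpacked precisely how---applying Observation~\ref{loop span} with $L=X$ under the contradictory hypothesis, then using a finite $N$-circuit and monotonicity of the chain $(\mathsf{span}_N(I_n))$ to push the witness down to a single $I_m$.
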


 For $ x\in X $, let $ \boldsymbol{E(x,n)} $ be the set of edges that are 
reachable from $ x $ in $ \boldsymbol{D_n}:=D(I_n) $ by a directed path. Let $\boldsymbol{ n_x} $ be the smallest 
natural number such 
that  for every $ y\in 
E\setminus X $ with   $ y \prec x $ we have $ y\in \mathsf{span}_N(I_{n_x}) $.

\begin{claim}\label{stabilazing stuff}
For every $ x\in X $ and  $  \ell\geq m\geq n_x $, 
\begin{enumerate}
\item\label{item stabilize} $ I_m\cap  E(x,m)=I_{\ell}\cap E(x,m) $,
\item\label{item same circuit}  
$ C_M(e,I_\ell)=C_M(e,I_{m})\subseteq E(x,m)$ for every $e\in  E(x,m)\setminus I_m $,
\item\label{item subdigraph} $ D_m[E(x,m)]$ is a subdigraph of $ D_{\ell}[E(x,m)] $,
\item\label{item increasing}  $ E(x,m)\subseteq E(x,\ell) $.
\end{enumerate}
\end{claim}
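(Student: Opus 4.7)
The plan is to prove parts (1)--(4) by simultaneous induction on $\ell \geq m$, with $x \in X$ and $m \geq n_x$ fixed throughout. As a preliminary observation I would show that for every $n \geq n_x$ the vertex $x$ admits no augmenting path for $I_n$: otherwise the $\prec$-minimal choice of the initial edge of $P_n$ would force $z_n \preceq x$, and a short case split (either $z_n = x$, or $z_n \prec x$ combined with the defining property of $n_x$ to give $z_n \in X$) would force $z_n \in I_n \vartriangle P_n \subseteq I_{n+1}$, contradicting $z_n \in X$ in both cases. In particular $E(x, n) \subseteq \mathsf{span}_M(I_n)$ for every $n \geq n_x$, so that every circuit $C_M(e, I_n)$ appearing in part (2) is well defined.

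The base case $\ell = m$ is then immediate: parts (1), (3), (4) are tautological, while part (2) follows because $C_M(e, I_m) - e$ is precisely the $D_m$-out-neighbourhood of $e$, each vertex of which is reachable from $x$ through $e$. For the inductive step, write $I'_\ell := I_\ell \vartriangle P_\ell$ and $I_{\ell+1} = I'_\ell \cup B_\ell$, where $B_\ell$ is the common base supplied by Lemma \ref{aug nice} inside the wave $W := W(M/I'_\ell,\, N/I'_\ell)$. The induction reduces to two sub-claims: (A) $P_\ell \cap E(x, \ell) = \emptyset$ and (B) $B_\ell \cap E(x, m) = \emptyset$.

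I would prove (A) by the same device as the preliminary observation: suppose some vertex $v$ of $P_\ell$ lies in $E(x, \ell)$, concatenate a directed $D_\ell$-walk from $x$ to $v$ with the suffix of $P_\ell$ from $v$ onward, note that $x$ is the only walk-vertex outside $\mathsf{span}_N(I_\ell)$, extract a $\subseteq$-minimal augmenting path (which must therefore start at $x$), and then rerun the case split on $z_\ell$. For (B), using (A) together with parts (1), (2), (4) of the inductive hypothesis, every $f \in E(x, m) \setminus I_m$ has $C_M(f, I_m) \subseteq E(x, m)$ disjoint from $P_\ell$, so $C_M(f, I'_\ell) = C_M(f, I_m)$ and $f$ is a loop of $M/I'_\ell$; loops cannot belong to the base $B_\ell$, and $B_\ell$ is disjoint from $I'_\ell$ (hence from $I_m \cap E(x, m)$), yielding $B_\ell \cap E(x, m) = \emptyset$.

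Given (A) and (B), parts (1)--(4) for $\ell + 1$ drop out: (1) is immediate, (2) is the circuit identity just exploited, (3) combines Lemma \ref{arc remain lemma} (whose hypotheses hold by (A), since the $D_\ell$-out-neighbours of any vertex of $E(x, m)$ lie in $E(x, \ell)$) with Observation \ref{arc remain fact} applied to $I'_\ell \subseteq I_{\ell+1}$ (whose hypotheses hold by (B)), and (4) follows from (3) by reachability. The main obstacle is sub-claim (A): the meshing of the $\prec$-minimal choice of the augmenting paths with the definition of $n_x$ is the decisive ingredient, and once (A) is in hand the rest is bookkeeping about how arcs and circuits propagate through the passage $I_\ell \to I'_\ell \to I_{\ell+1}$.
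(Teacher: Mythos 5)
Your proposal is correct and follows essentially the same route as the paper's proof: the disjointness $P_\ell\cap E(x,\ell)=\varnothing$ obtained from the $\prec$-minimal choice of augmenting paths together with the definition of $n_x$, the observation that every $e\in E(x,m)\setminus I_m$ becomes a loop of $M/(I_\ell\vartriangle P_\ell)$ and hence cannot enter the nice-feasible extension, and the combination of Lemma \ref{arc remain lemma} with Observation \ref{arc remain fact} for parts \ref{item subdigraph} and \ref{item increasing}. You merely organize the induction with $m$ fixed and spell out some steps (the extraction of an augmenting path starting at $x$, the case split on $z_n$) that the paper leaves terse.
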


\begin{proof}
Suppose that there is an $ n\geq n_x $ such that we know already the statement  whenever  $m,\ell \leq n $. For the induction step 
it is 
enough to 
show that the claim holds for $ n $ and $ n+1 $.
\begin{prop}
 $ P_n \cap E(x,n)=\varnothing $.
\end{prop}
\begin{proof}
A meeting of $ P_n $ and $ E(x,n) $ would show that there is also an augmenting path in $ D_n $ starting at $ x $ which is 
impossible since $ x\in X $ and  $ n\geq n_x $.
\end{proof}
\begin{cor}
$ I_{n}\cap  E(x,n)=(I_n \vartriangle P_n)\cap E(x,n) $.
\end{cor}
\begin{prop}
 $ (I_n \vartriangle P_n)\cap  E(x,n)=I_{n+1}\cap E(x,n) $.
\end{prop}
\begin{proof}
The edges $I_{n+1}\setminus (I_n \vartriangle P_n) $ are independent in $ M/(I_n \vartriangle P_n) $ but by the definition of
$ D_n $, for every 
$e\in  E(x,n)\setminus I_n  $ we 
have $  E(x,n)\supseteq C_M(e,I_n)=C_M(e,I_n \vartriangle P_n)$ witnessing that $ e $ is an $ M/(I_n \vartriangle P_n)$-loop.
\end{proof}
\begin{cor}\label{circ subset}
$ I_n\cap  E(x,n)=I_{n+1}\cap E(x,n) $  and  for every $e\in  E(x,n)\setminus I_n $ we have 
$ C_M(e,I_n)=C_M(e,I_{n+1})\subseteq E(x,n)$.
\end{cor}

Finally,  for $ e\in  E(x,n)$, $ P_n $ does not meet $ e $ or any of its out-neighbours with respect 
to $ D_n $  because $ P_n\cap E(x,n)=\varnothing  $. Hence by applying Lemma \ref{arc remain lemma} with $ e, I_n $  and 
$ D_n $   (and then Observation \ref{arc remain fact}) we may conclude that  $ ef\in D_{n+1} $ whenever   $ef\in  D_n $.  
It follows that 
$ D_n[E(x,n)]$ is a 
subdigraph of $ D_{n+1}[E(x,n)] $  which implies $ E(x,n)\subseteq E(x,n+1) $  since reachability is witnessed by the 
same directed paths.
\end{proof}

Beyond Claim \ref{stabilazing stuff} we will need the following technical  statement.
\begin{prop}\label{circuit simple}
Let $ I $ be an independent set in some fixed  finitary matroid. Suppose that there is a circuit $ C\subseteq \mathsf{span}(I) $ 
with $ e\in I\cap C $.  Then there is an $ f\in C\setminus I $ with $e\in  C(f,I) $.
\end{prop}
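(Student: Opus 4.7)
My plan is proof by contradiction. Assume that for every $f \in C \setminus I$ we have $e \notin C(f, I)$. I will iteratively apply strong circuit elimination, starting from $C$, to strictly shrink the non-$I$ part of the circuit while keeping $e$ inside, until I reach a circuit contained in $I$, which contradicts the independence of $I$. The finitary hypothesis enters only to guarantee that $C$ (and hence $C \setminus I$) is finite, so the process terminates.

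Concretely, set $F := C \setminus I$; then $F$ is finite, and nonempty (otherwise $C \subseteq I$ already contradicts independence). For each $f \in F$ the fundamental circuit $C(f,I)$ lies in $I+f$, so $C(f,I) \setminus I = \{f\}$, and by the contradiction hypothesis $e \notin C(f,I)$. I would build a sequence of circuits $D_{0}:=C, D_{1}, D_{2}, \ldots$, each containing $e$, with $D_k \setminus I \subseteq F$ strictly decreasing in $k$. Given $D_k$ with $D_k \setminus I \neq \varnothing$, pick any $f \in D_k \setminus I \subseteq F$; then $f \in D_k \cap C(f,I)$ while $e \in D_k \setminus C(f,I)$ (since $e \in I$ forces $e \neq f$, and $e \notin C(f,I)$ by assumption), so strong circuit elimination produces a circuit $D_{k+1}$ with $e \in D_{k+1} \subseteq (D_k \cup C(f,I)) \setminus \{f\}$. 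Consequently
\[
D_{k+1} \setminus I \;\subseteq\; \bigl((D_k \setminus I) \cup \{f\}\bigr) \setminus \{f\} \;=\; (D_k \setminus I) \setminus \{f\},
\]
so $|D_{k+1} \setminus I| < |D_k \setminus I|$.

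After at most $|F|$ steps I reach some $D_m$ with $D_m \setminus I = \varnothing$, i.e., $D_m \subseteq I$, which contradicts the independence of $I$ since $D_m$ is a circuit. There is no real obstacle here; the only delicate points are the two invariants of the recursion, namely that $e$ is preserved (this is why we need the contradiction hypothesis $e \notin C(f,I)$ in the form $e \in D_k \setminus C(f,I)$) and that $|D_k \setminus I|$ strictly decreases (this uses $C(f,I)\setminus I = \{f\}$). Finitarity is used exactly once, to ensure the process terminates after finitely many steps.
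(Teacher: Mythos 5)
Your proof is correct and is essentially the paper's argument: both iterate strong circuit elimination of a chosen $f\in D_k\setminus I$ against its fundamental circuit $C(f,I)$, keeping $e$ and noting that $|D_k\setminus I|$ strictly decreases, with finitarity guaranteeing termination. The only difference is presentational — you run the induction as a proof by contradiction (ending at a circuit inside $I$) where the paper argues directly (stopping as soon as some $g$ has $e\in C(g,I)$) — which changes nothing of substance.
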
 

\begin{proof}
We apply induction on $ \left|C\setminus I\right| $. If $ C\setminus I $ is a 
singleton, then its only element is suitable for $ f $ since $ C(f,I)=C  $. Suppose that $ \left|C\setminus I\right| \geq 2 $ and 
pick a $ g\in C\setminus I $. If $ e\in C(g,I)  $, then $ f:=g $ is as desired. Otherwise we 
apply strong circuit 
elimination with $  C$  and $ C(g,I) $ keeping $ e $ and removing $ g $. The resulting $ C'$ satisfies the premisses of the 
proposition and $ C'\setminus I 
\subsetneq C\setminus I $ holds thus we are done by induction.
\end{proof}

Let us recall that according to our indirect assumption  ${ E\setminus 
\bigcup_{n=0}^{\infty}\mathsf{span}_N(I_n)\neq \varnothing} $ and $ X $ was defined to be the left side. In order to get a 
contradiction, we 
show that $ 
{\boldsymbol{W}:=\bigcup_{x\in 
X}\bigcup_{n=n_x}^{\infty} E(x,n)} $ is a non-trivial wave.   Note that property \ref{item stabilize} at Claim \ref{stabilazing 
stuff}  guarantees that for 
each  $ e\in W $ either $ {\{ 
n\in \mathbb{N}: e\in I_n \}} $ or its 
complement is finite.  Let $ J $ consists of the
latter type of edges of $ W$, i.e., that are elements of $ I_n $ for every large enough $ n $. Since $ M $ and $ N $ are 
finitary,  $ J $ is a common independent set. By property \ref{item same circuit},  $W\subseteq 
\mathsf{span}_{M}(J) $. 
 We 
show 
that $ J $ is 
independent in $ N.W $. Suppose for a contradiction that there exists an $ N $-circuit $ C $ that meets $ J $ but 
avoids
$ W\setminus J $.  Since $ J $ is $ N $-independent and $ C $ does not meet $ W\setminus J $, we have 
$C\setminus J= C\setminus W\neq\varnothing  $. Let us pick some $ e\in C\cap J$.
For every large enough $ n $ we have $ C\cap J\subseteq C\cap I_n $ and  $ I_n $ spans  $ C $ in $ N $ (for the latter we use 
$ X\subseteq W\setminus J $). Applying Proposition 
\ref{circuit simple} with $I_n,  N, C $ and $ e $ tells that  $e\in C_N(f,I_n) $ 
for 
some $ f\in C\setminus W $ whenever $ n $ is large enough. 
Then we can take an $ x\in X $ and an $ n\geq n_x $ such that $ e\in E(x,n) \cap C_N(f,I_n) $ 
for some $ f\in C\setminus W $. Then by definition $ f \in E(x,n) \subseteq W $ which contradicts $ f\in C\setminus W $. Thus  
$ J $ is indeed
independent in $ N.W $ and hence  $ W $ is a wave. Observation \ref{not jus Mloop} guarantees that $ W $ is non-trivial  
which contradicts $ \mathsf{cond}^{+}(M,N) $.
\end{proof}

\begin{bibdiv}
\begin{biblist}

\bib{MR1678148}{article}{
   author={Aharoni, Ron},
   author={Ziv, Ran},
   title={The intersection of two infinite matroids},
   journal={J. London Math. Soc. (2)},
   volume={58},
   date={1998},
   number={3},
   pages={513--525},
   issn={0024-6107},
   review={\MR{1678148}},
   doi={10.1112/S0024610798006723},
}

\bib{aharoni2009menger}{article}{
   author={Aharoni, Ron},
   author={Berger, Eli},
   title={Menger's theorem for infinite graphs},
   journal={Invent. Math.},
   volume={176},
   date={2009},
   number={1},
   pages={1--62},
   issn={0020-9910},
   review={\MR{2485879}},
   doi={https://doi.org/10.1007/s00222-008-0157-3},
}

\bib{MR3784779}{article}{
   author={Aigner-Horev, Elad},
   author={Carmesin, Johannes},
   author={Fr\"{o}hlich, Jan-Oliver},
   title={On the intersection of infinite matroids},
   journal={Discrete Math.},
   volume={341},
   date={2018},
   number={6},
   pages={1582--1596},
   issn={0012-365X},
   review={\MR{3784779}},
   doi={https://doi.org/10.1016/j.disc.2018.02.018},
}	

\bib{edmonds2003submodular}{article}{
  title={Submodular functions, matroids, and certain polyhedra},
  author={Edmonds, Jack},
  booktitle={Combinatorial Optimization—Eureka, You Shrink!},
  pages={11--26},
  year={2003},
  doi={https://doi.org/10/dfs3x8}
  publisher={Springer}
}	

\bib{MR274315}{article}{
   author={Higgs, D. A.},
   title={Matroids and duality},
   journal={Colloq. Math.},
   volume={20},
   date={1969},
   pages={215--220},
   issn={0010-1354},
   review={\MR{274315}},
   doi={https://doi.org/10.4064/cm-20-2-215-220},
}

\bib{MR1165540}{article}{
   author={Oxley, James},
   title={Infinite matroids},
   conference={
      title={Matroid applications},
   },
   book={
      series={Encyclopedia Math. Appl.},
      volume={40},
      publisher={Cambridge Univ. Press, Cambridge},
   },
   date={1992},
   pages={73--90},
   review={\MR{1165540}},
   doi={https://doi.org/10.1017/CBO9780511662041.004},
}

\bib{MR540005}{article}{
   author={Oxley, James G.},
   title={Infinite matroids and duality},
   language={English, with French summary},
   conference={
      title={Probl\`emes combinatoires et th\'{e}orie des graphes},
      address={Colloq. Internat. CNRS, Univ. Orsay, Orsay},
      date={1976},
   },
   book={
      series={Colloq. Internat. CNRS},
      volume={260},
      publisher={CNRS, Paris},
   },
   date={1978},
   pages={325--326},
   review={\MR{540005}},
}
		
\bib{MR3045140}{article}{
   author={Bruhn, Henning},
   author={Diestel, Reinhard},
   author={Kriesell, Matthias},
   author={Pendavingh, Rudi},
   author={Wollan, Paul},
   title={Axioms for infinite matroids},
   journal={Adv. Math.},
   volume={239},
   date={2013},
   pages={18--46},
   issn={0001-8708},
   review={\MR{3045140}},
   doi={https://doi.org/10.1016/j.aim.2013.01.011},
}

\bib{MR3347465}{article}{
   author={Bowler, Nathan},
   author={Carmesin, Johannes},
   title={Matroid intersection, base packing and base covering for infinite
   matroids},
   journal={Combinatorica},
   volume={35},
   date={2015},
   number={2},
   pages={153--180},
   issn={0209-9683},
   review={\MR{3347465}},
   doi={https://doi.org/10.1007/s00493-014-2953-2},
}

\bib{nathanhabil}{thesis}{
    title={Infinite matroids},
    author={Bowler, Nathan},
    type={Habilitation thesis, University of Hamburg},
    year={2014},
    note={\url{https://www.math.uni-hamburg.de/spag/dm/papers/Bowler_Habil.pdf}},
}

\bib{erde2019base}{article}{
    title={Base partition for mixed families of finitary and cofinitary matroids},
    journal={Combinatorica},
    author={Erde, Joshua},
    author={Gollin, Pascal},
    author={Joó, Atilla },
    author={Knappe, Paul},
    author={Pitz, Max},
    year={2020},
    doi={https://doi.org/10.1007/s00493-020-4422-4}
}

\end{biblist}
\end{bibdiv}

\end{document}